\newtheorem{theorem}{Theorem}[section]
\newtheorem{lemma}[theorem]{Lemma}
\title{{\Large \bf  Least $Q$-eigenvalues of nonbipartite 2-connected graphs \thanks{Supported by NSFC
(No. 11771376, 11571252), Natural Science Foundation of Guangdong Province (2019A1515011031), Foundation of Lingnan Normal
University(ZL1923), ``333" Project of  Jiangsu (2016), NSFCU of Jiangsu (16KJB110011).}}}
\author{Guanglong Yu$^{a,b}$\thanks{Corresponding authors, E-mail addresses:
yglong01@163.com (G. Yu), rockzhang76@tzc.edu.cn (H. Zhang).} ~ Lin Sun$^{a}$~
 ~ Chao Yan$^{c,d}$ ~ Yarong Wu$^e$ ~  Hailiang Zhang$^f$$^{\dag}$ ~
\\ ~ \\
{\footnotesize $^a$Department of Mathematics, Lingnan Normal
University,  Zhanjiang, 524048, Guangdong, China}\\ {\footnotesize $^b$Department of Mathematics, Yancheng Teachers
University, Yancheng, 224002, Jiangsu, China}\\
{\footnotesize $^c$Meteorological and Oceanographic College, Natinonal University of Defense Technology, Nanjing, 211101, Jiangsu, China}\\
{\footnotesize $^d$Department of Mathematics, Pujiang Institute, Nanjing Tech University, Nanjing, 211101, Jiangsu, China}\\
{\footnotesize $^e$ SMU college of art and science, Shanghai maritime
University, Shanghai, 200135, China}\\
{\footnotesize $^f$Department of Mathematics, Taizhou University, Linhai, Zhejiang, 317000, China}}
\date{}
\begin{document}
\maketitle

\begin{abstract}
Among all simple nonbipartite 2-connected graphs and among all  nonbipartite $\theta$-graphs, the minimum least $Q$-eigenvalues are completely determined, respectively.

\bigskip
\noindent {\bf AMS Classification:} 05C50

\noindent {\bf Keywords:} Signless Laplacian; Least $Q$-eigenvalue; 2-connected graph; Nonbipartite
\end{abstract}
\baselineskip 21.6pt

\section{Introduction}

\ \ \ \  For a graph $G$ with vertex set $V(G) = \{v_{0}, v_{1}, \ldots, v_{n-1}\}$ and edge set $E(G)$, denote by $A(G)$ (or $A$ for short) the adjacency matrix and $D(G)$ (or $D$ for short) the diagonal matrix of degrees, where $D(G)=
\mathrm{diag}(deg_{G}(v_{0})$, $deg_{G}(v_{1})$,
$\ldots$, $deg_{G}(v_{n-1}))$ with $deg_{G}(v_{i})$ (or $deg(v_{i})$ for short) denoting the degree of vertex $v_{i}$. $Q(G)= D(G) + A(G)$ is called the $signless$ $Laplacian$ matrix. Note that a network can be looked as a graph. Thus we can study the property of a network by study a graph including the algebraic or combinatoric optimal properties. Note that $Q(G)$ is symmetric, and positive semi-definite because $X^{T}Q(G)X=\sum_{v_{i}v_{j}\in E(G)} (x(v_{i})+x(v_{j}))^{2}\geq 0$ for any $X=(x(v_{0})$, $x(v_{1})$, $\ldots$, $x(v_{n-1}))^{T}\in R^{n}$ where $x(v_{i})$ corresponds to vertex $v_{i}$. Then the eigenvalues of $Q(G)$ are all nonnegative reals. The largest eigenvalue of $Q(G)$, denoted by $\rho(G)$, is called the $Q$-spectral radius of $G$; the least eigenvalue of $Q(G)$, denoted by $q(G)$, is called the least $Q$-eigenvalue of $G$. From spectral graph theory, it is known that for a system (multivariate function) $f_{G}(X)=\frac{X^{T}Q(G)X}{X^{T}X}=\sum_{v_{i}v_{j}\in E(G)} (x(v_{i})+x(v_{j}))^{2}/\sum_{i=0}^{n-1}x^{2}(v_{i})$ based on a graph $G$, under the condition $x(v_{i})\in R$ for $0\leq i\leq n-1$ and $X\neq \mathbf{0}^{T}$ (where $\mathbf{0}^{T}$ is the zero vector with all entries being 0), $\max f_{G}(X)$ is the $Q$-spectral radius of $Q(G)$; $\min f_{G}(X)$ is the least $Q$-eigenvalue of $Q(G)$. Also from spectral graph theory, it is known that a vector $X\in R^{n}$ satisfies that $f_{G}(X)\geq \rho(G)$ if and only if $X$ is an eigenvector of $G$ corresponding to $\rho(G)$; a vector $X\in R^{n}$ ($X\neq \mathbf{0}^{T}$) satisfies that $f_{G}(X)\leq q(G)$ if and only if $X$ is an eigenvector of $G$ corresponding to $q(G)$. The eigenvalues of a graph (spectral radius and the least eigenvalue in particular) are always used to study the properties of the communication in this graph \cite{DCSS}. Moreover, the least $Q$-eigenvalue of a graph was used to study the impulsive cluster anticonsensus problem of discrete
multiagent linear dynamic systems \cite{ZJ1, ZJ2}. The least $Q$-eigenvalue of a graph is also looked as a measure to discriminate the bipartiteness of this graph because for a connected graph, its least $Q$-eigenvalue is zero if and only if it is bipartite \cite{D.P.S}. These make the study of least $Q$-eigenvalue of a graph as a nice topic in spectral research for graphs.

Given a graph $G$, the cardinality $\|V(G)\| =n$ is always called the $order$, $\|E(G)\| =m$ is always called the $size$ (in this paper, we denote $\|S\|$ the cardinality of a set $S$).
Throughout this paper, the graphs considered are connected, simple (no loops and no multiple edges) and
undirected.

There are a lot of results about the $Q$-spectral radius but much fewer about the least $Q$-eigenvalue of a graph. Note again that for a connected graph, its least $Q$-eigenvalue is zero if and only if it is bipartite. Thus the research on the least $Q$-eigenvalue focus mainly on the nonbipartite graphs. Some nice results about the least $Q$-eigenvalue of a graph have been shown in \cite{CCRS}, \cite{D.P.S}-\cite{ZJ2} and the references therein. In \cite{GZY}, the authors investigated the least $Q$-eigenvalue among all the nonbipartite Hamiltonian graphs of given order and determined the least $Q$-eigenvalue. With the motivation to investigate the least $Q$-eigenvalue about the more general graphs, we consider the nonbipartite 2-connected graphs.

Now, we recall some notions and notations of a graph. For a graph $G$, denote by $N_{G}(u)$ the neighbor set of vertex $u$. For two different vertices  $u, v$ in $G$, we denote by $dist_{G}(u,v)$ the distance between $u$ and $v$ where the distance is the length of the shortest path from $u$ to $v$. We denote by $L(P)$, $L(C)$ the length of a path $P$ and a cycle $C$ respectively. A cycle with odd (even) length is called an $odd$ ($even$) cycle. A graph is called $complete$ if its any two different vertices are adjacent; otherwise, it is called $uncomplete$. Given a connected uncomplete graph $G$, if $G$ has a vertex subset $S\subset V(G)$ that $G-S$ is not connected where $G-S$ is the graph obtained from $G$ by deleting all the vertices in $S$ and all the edges incident with the vertices in $S$, then $S$ is called a $vertex$ $cut$ of $G$; the cardinality $\|S\|$ is called the capacity of vertex cut $S$. The smallest capacity among all the vertex cuts of $G$, denoted by $c(G)$, is called the $connectivity$ of $G$. If $G$ is a complete graph of order $n\geq 2$, we define $c(G)=n-1$. A graph with order $n\geq k+1$ and connectivity $k$ is called a $k$-$connected$ graph. For a connected nontrivial graph $G$ and a vertex $v$ in $G$, if $G-v$ is disconnected where $G-v$ is obtained from $G$ by deleting vertex $v$ and all the edges incident with $v$, then $v$ is called a $cut$ $vertex$. Obviously, a connected graph of order $n\geq 3$ is 1-connected if and only if it has a cut vertex. In a graph, two paths $P_{1}$ and $P_{2}$ from vertex $u$ to $v$ are called $inner$ $disjoint$ if $V(P_{1})\cap V(P_{2})=\{u, v\}$ (that is, no inner vertex in common). The $local$ $connectivity$ between two distinct vertices $u$ and $v$, denoted by $p(u, v)$, is the maximum number
of pairwise inner disjoint paths from $u$ to $v$. The famous Menger's Theorem (see \cite{JBUM1} and \cite{JBUM} for example) tells us that in a nontrivial connected graph $G$, $p(u, v)\geq c(G)$ for any pair of distinct vertices
$u$ and $v$ in $G$. Hence, in \cite{JBUM}, a $k$-$connected$ graph $G$ is also defined to be the graph in which $p(u, v)\geq k$ for its any two distinct vertices
$u$ and $v$. A $\theta$-graph is a 2-connected graph which consists of three pairwise inner disjoint paths with common initial and terminal vertices.

For a graph $G$, let $G +uv$ denote the graph
obtained from $G$ by adding a new edge $uv\notin E(G)$ between two nonadjacent vertices $u, v$ in $G$; let $G-uv$ denote the graph
obtained from $G$ by deleting an edge $uv\in E(G)$; for another graph $K$ with $\|E(K)\|\geq 1$, $E(K)\nsubseteq  E(G)$, let $G +K$ denote the graph
obtained from $G$ and $K$ with new vertex set $V(G+K)=V(G)\cup V(K)$ and new edge set $E(G+K)=E(G)\cup E(K)$, where $V(K)\cap  V(G)\neq \emptyset$ and $E(K)\cap  E(G)\neq \emptyset$ possibly. In this paper, we let $C_{n} = v_{0}v_{ 1} v_{ 2}\cdots v_{n-1}v_{0}$ be the cycle of order $n$, and let $H(i_{ 1}, \ldots, i_{ k})= C_{ n} +v_{ i_{ 1}} v_{ n-i_{1}} +\cdots+v_{ i_{ k}} v_{ n-i_{ k}} $ where $n \geq 5$ is odd, $1 \leq k \leq \frac{n -3}{2}$
and $1 \leq i_{ 1} \leq \cdots \leq i_{ k} \leq \frac{n -3}{2}$ (for example, see $H(1, 2, \ldots, \frac{n -3}{2})$ in Fig. 1.1).

\

\setlength{\unitlength}{0.6pt}
\begin{center}
\begin{picture}(719,135)
\put(25,79){\circle*{4}}
\put(85,47){\circle*{4}}
\qbezier(25,79)(55,63)(85,47)
\put(85,112){\circle*{4}}
\qbezier(25,79)(55,96)(85,112)
\put(142,112){\circle*{4}}
\qbezier(85,112)(113,112)(142,112)
\put(142,47){\circle*{4}}
\qbezier(85,47)(113,47)(142,47)
\qbezier(142,112)(142,80)(142,47)
\qbezier(85,112)(85,80)(85,47)
\put(180,108){\circle*{4}}
\put(196,108){\circle*{4}}
\put(211,108){\circle*{4}}
\put(181,51){\circle*{4}}
\put(197,51){\circle*{4}}
\put(212,51){\circle*{4}}
\put(244,112){\circle*{4}}
\put(244,50){\circle*{4}}
\qbezier(244,112)(244,81)(244,50)
\put(296,112){\circle*{4}}
\qbezier(244,112)(270,112)(296,112)
\put(296,50){\circle*{4}}
\qbezier(244,50)(270,50)(296,50)
\qbezier(296,112)(296,81)(296,50)
\put(6,78){$v_{0}$}
\put(80,33){$v_{1}$}
\put(138,33){$v_{2}$}
\put(286,35){$v_{\frac{n-1}{2}}$}
\put(288,123){$v_{\frac{n+1}{2}}$}
\put(130,120){$v_{n-2}$}
\put(72,120){$v_{n-1}$}
\put(78,-9){Fig. 1.1. $H(1, 2, \ldots, \frac{n -3}{2})$}
\put(226,35){$v_{\frac{n-3}{2}}$}
\put(556,115){\circle*{4}}
\put(556,35){\circle*{4}}
\qbezier(556,115)(556,75)(556,35)
\put(477,75){\circle*{4}}
\qbezier(556,115)(477,115)(477,75)
\qbezier(477,75)(477,33)(556,35)
\qbezier(556,115)(718,119)(717,75)
\qbezier(556,35)(719,31)(717,75)
\put(556,74){\circle*{4}}
\put(458,73){$v_{0}$}
\put(563,71){$v_{1}$}
\put(549,125){$v_{2}$}
\put(546,21){$v_{n-1}$}
\put(611,115){\circle*{4}}
\put(607,125){$v_{3}$}
\put(539,-8){Fig. 1.2. $\Theta$}
\end{picture}
\end{center}

Let $n\geq 4$ be an positive even integer, $\mathscr{C}=v_{1}v_{2}\cdots v_{n-1}v_{1}$ be an odd cycle. Let graph $\Theta(j,k)=\mathscr{C}+v_{0}v_{j}+v_{0}v_{k}$ where $j$, $k$ are positive integers that $1\leq j< k\leq n-1$, $\Theta=\Theta(2,n-1)$ (see Fig. 1.2). Denote by $P_{1}(\Theta(j,k))=v_{j}v_{j+1}\cdots v_{k-1}v_{k}$, $P_{2}(\Theta(j,k))=\mathscr{C}-\{v_{j+1}$, $\ldots$, $v_{k-1}\}=v_{j}v_{\eta_{1}}v_{\eta_{2}}\cdots v_{\eta_{z}}v_{k}$ if $k-j< n-2$ where $z=n-k+j-2$; $P_{2}(\Theta(j,k))=\mathscr{C}-\{v_{j+1}$, $\ldots$, $v_{k-1}\}=v_{j}v_{k}$ if $k-j= n-2$ (in fact, $j=1$ and $k=n-1$ now).

In this paper, for determining the minimum least Q-eigenvalues among all the simple nonbipartite 2-connected graphs and among all nonbipartite $\theta$-graphs, we explore some new results on the structural characteristics, on the characteristics of the eigenvector for a nonbipartite 2-connected graph, on the relation between the eigenvector and the structure of a graph, and represent some new results on the influence of the least $Q$-eigenvalue under some structural perturbations. Using these tool results, we determine  the minimum least $Q$-eigenvalues among all simple nonbipartite 2-connected  graphs and among all nonbipartite $\theta$-graphs as the following two theorems.

\begin{theorem}\label{th01.01} 
Let $G$ be a nonbipartite 2-connected graph of order $n\geq 3$.

$\mathrm{(i)}$ If $n$ is odd, then $q(C_{n})\leq q(G)$. Moreover,
if the equality holds, then $G \cong C_{ n}$ or $G \cong H(i_{ 1}, \ldots, i_{ k})$.

$\mathrm{(ii)}$ If $n$ is even, then $q(\Theta)\leq q(G)$ with equality if and only if $G\cong \Theta$.
\end{theorem}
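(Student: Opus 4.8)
The plan is to combine the edge-monotonicity of $q$ with a rigid description of the least-$Q$ eigenvector and a handful of perturbation lemmas. First, since $X^{T}Q(G)X=\sum_{uv\in E(G)}(x(u)+x(v))^{2}$, for a non-edge $uv$ one has $Q(G+uv)=Q(G)+(e_{u}+e_{v})(e_{u}+e_{v})^{T}\succeq Q(G)$, hence $q(G+uv)\ge q(G)$. Therefore, among the nonbipartite $2$-connected graphs of order $n$ minimizing $q$ there is one from which no edge can be deleted while keeping $2$-connectedness and nonbipartiteness. A $2$-connected graph of order $n$ has at least $n$ edges, with equality only for $C_{n}$ (which is nonbipartite precisely when $n$ is odd), and with $n+1$ edges only for a $\theta$-graph. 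So it remains to minimize $q$ over $\{C_{n}\}$ when $n$ is odd, and over the nonbipartite $\theta$-graphs when $n$ is even, and then to identify all equality cases.

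\emph{The odd case.} For odd $n$, $C_{n}$ lies in the class, so the minimum is at most $q(C_{n})=2-2\cos(\pi/n)$. I would first check equality for every $H(i_{1},\ldots,i_{k})$ directly: with $C_{n}=v_{0}v_{1}\cdots v_{n-1}v_{0}$, the vector $X$ with $x(v_{j})=(-1)^{j+1}\sin(\pi j/n)$ satisfies $Q(C_{n})X=(2-2\cos(\pi/n))X$ (a one-line computation from $Q(C_{n})=2I+A$, the required $n$-periodicity holding exactly because $n$ is odd) and, again because $n$ is odd, $x(v_{i})+x(v_{n-i})=0$ for all $i$; hence $X$ remains an eigenvector with the same eigenvalue of $Q(H(i_{1},\ldots,i_{k}))=Q(C_{n})+\sum_{t}(e_{v_{i_{t}}}+e_{v_{n-i_{t}}})(e_{v_{i_{t}}}+e_{v_{n-i_{t}}})^{T}$, and monotonicity forces $q(H(i_{1},\ldots,i_{k}))=q(C_{n})$. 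For the reverse inequality and the list of extremal graphs I would take a minimizer $G$ with unit eigenvector $X$ for $q:=q(G)$; from the eigenequation $q\,x(u)=\deg(u)\,x(u)+\sum_{v\sim u}x(v)$ and the smallness of $q$, deduce that $X$ has at most one zero entry and is ``sine-shaped'' (its modulus is monotone along induced paths); use the perturbation lemmas to exclude the non-Hamiltonian edge-minimal $\theta$-configurations (each shown to have $q>q(C_{n})$ by transplanting a test vector from $C_{n}$), so that $G$ contains $C_{n}$ as a spanning subgraph; then from $q=X^{T}Q(C_{n})X+\sum_{\mathrm{chords}}(x(u)+x(v))^{2}\ge q(C_{n})$ conclude that equality forces $X$ to be a least-$Q$ eigenvector of $C_{n}$ with $x(u)+x(v)=0$ on every chord, and an inspection of the two-dimensional least eigenspace of $Q(C_{n})$ shows the only admissible chords are the symmetric ones $v_{i}v_{n-i}$ (after a cyclic relabelling), i.e.\ $G\cong C_{n}$ or $G\cong H(i_{1},\ldots,i_{k})$.

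\emph{The even case.} For even $n$, $C_{n}$ is bipartite, so we minimize $q$ over nonbipartite $\theta$-graphs and aim to show the minimizer is the unique graph $\Theta=\Theta(2,n-1)$. Write such a graph as three internally disjoint $u$--$w$ paths of lengths $p\le q\le r$, where nonbipartiteness means these lengths are not all of one parity. Using ``path-transfer'' perturbations --- lengthening one path at the expense of another, and relocating the attachment of the shortest path --- together with the rigidity of the least-$Q$ eigenvector (supported off a single vertex, sine-like along the long odd cycle), I would show that each admissible move strictly decreases $q$ until the configuration is an odd $(n-1)$-cycle carrying one extra vertex on a path of length $2$, which is exactly $\Theta$; strictness of each step yields uniqueness.

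\emph{Main obstacle.} The crux is the odd-case lower bound for graphs that are \emph{not} Hamiltonian: nothing forces a minimizer \emph{a priori} to contain a spanning odd cycle, so one must genuinely carry out the eigenvector-rigidity and perturbation arguments on the $\theta$-graph (and denser) configurations to push $q$ down to $q(C_{n})$, and then extract the exact extremal family $H(i_{1},\ldots,i_{k})$ from the equality discussion; the analogous parity-constrained case bookkeeping among $\theta$-graphs is the corresponding difficulty in the even case.
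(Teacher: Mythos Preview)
Your reduction step has a real gap. You argue that by monotonicity one may pass to an edge-minimal nonbipartite $2$-connected graph, and then assert that such a graph must be $C_{n}$ or a $\theta$-graph because those are the $2$-connected graphs with $n$ or $n+1$ edges. But ``edge-minimal with respect to the property'' is not the same as ``minimum edge count''. For instance, any nonbipartite subdivision of $K_{4}$ is minimally $2$-connected (hence also edge-minimal nonbipartite $2$-connected) and has $n+2$ edges and four vertices of degree $3$; it is neither a cycle nor a $\theta$-graph. So after your deletion step you are still left with an uncontrolled family, and your sentence ``it remains to minimize $q$ over $\{C_{n}\}$ when $n$ is odd, and over the nonbipartite $\theta$-graphs when $n$ is even'' is not justified. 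You flag this yourself in the obstacle paragraph (``and denser configurations''), but you give no mechanism for handling anything beyond $\theta$-graphs, so the proof sketch does not close.

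The paper avoids this problem by never trying to delete down to a spanning subgraph of $G$. Instead it works with the eigenvector $X$ of $G$ itself: the vertex $v_{\mu}$ of maximum $|x|$ is shown to lie on some odd cycle $C$ inside $G$ (this uses $2$-connectivity via two disjoint paths to $C$). If $C$ is not spanning, the paper \emph{builds a brand-new} $C_{n}$ (odd $n$) or $\Theta$ (even $n$) by splicing the leftover vertices $v_{i_{1}},\ldots,v_{i_{k}}$ into $C$ as a path at $v_{\mu}$, and defines a test vector $Y$ that agrees with $X$ on $V(C)$ and equals $(-1)^{t}x(v_{\mu})$ on the inserted vertices. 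Then $Y^{T}QY$ equals $\sum_{e\in E(C)}(x+ x)^{2}\le X^{T}Q(G)X$ while $Y^{T}Y\ge X^{T}X$, giving $q(C_{n})\le q(G)$ (resp.\ $q(\Theta)\le q(G)$), and a short eigen-equation check at an inserted vertex forces strict inequality. This single construction replaces your entire ``exclude the non-Hamiltonian edge-minimal configurations'' program and works uniformly for all $G$, not just $\theta$-graphs or $K_{4}$-subdivisions. For the even case with $L(C)=n-1$ the paper does reduce to a spanning $\Theta(j,k)$ and then runs a detailed eigenvector-symmetry and perturbation analysis much like what you outline; that part of your plan is in the right spirit, but it only becomes sufficient once the reduction from general $G$ is handled as above.
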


\begin{theorem}\label{th01.02} 
Let $G$ be a nonbipartite $\theta$-graph of order $n\geq 4$.

$\mathrm{(i)}$ If $n$ is odd, then $q(C_{n})\leq q(G)$. Moreover,
if the equality holds, then $G \cong C_{ n}$ or $G \cong H(i)$ for some $1\leq i\leq (n -3)/2$.

$\mathrm{(ii)}$ If $n$ is even, then $q(\Theta)\leq q(G)$ with equality if and only if $G\cong \Theta$.
\end{theorem}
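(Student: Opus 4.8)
The plan is to obtain Theorem~\ref{th01.02} as a specialization of Theorem~\ref{th01.01}. A nonbipartite $\theta$-graph of order $n\ge 4$ is in particular a nonbipartite $2$-connected graph of order $n\ge 3$, so Theorem~\ref{th01.01} already supplies the inequalities $q(C_n)\le q(G)$ (when $n$ is odd) and $q(\Theta)\le q(G)$ (when $n$ is even). What remains is only to decide which of the graphs appearing in the equality clauses of Theorem~\ref{th01.01} actually are $\theta$-graphs. For this I use one structural fact: a $\theta$-graph of order $n$ has exactly $n+1$ edges and exactly two vertices of degree $3$, its branch vertices, every other vertex having degree $2$. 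In particular no $\theta$-graph is regular, while $\Theta=\Theta(2,n-1)=\mathscr{C}+v_0v_2+v_0v_{n-1}$ is a $\theta$-graph: its branch vertices $v_2$ and $v_{n-1}$ are joined by the three internally disjoint paths $v_2v_1v_{n-1}$, $v_2v_0v_{n-1}$ and $v_2v_3v_4\cdots v_{n-1}$.

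For part~(i), let $n$ be odd and let $G$ be a nonbipartite $\theta$-graph with $q(G)=q(C_n)$. By Theorem~\ref{th01.01}(i), $G\cong C_n$ or $G\cong H(i_1,\dots,i_k)$. The first alternative cannot occur, since $C_n$ has only $n$ edges and hence is not a $\theta$-graph (it appears in the statement only to name the value of the minimum). In the second, $H(i_1,\dots,i_k)$ has $n+k$ edges, so being a $\theta$-graph forces $k=1$; thus $G\cong H(i_1)$ with $1\le i_1\le(n-3)/2$. Conversely, for every such $i$ the graph $H(i)=C_n+v_iv_{n-i}$ is a cycle with one chord, hence a $\theta$-graph of order $n$, and it is nonbipartite because it contains the odd cycle $C_n$; moreover $q(H(i))=q(C_n)$. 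This last equality follows from the variational characterization of $q$ in the Introduction: since $E(H(i))\supseteq E(C_n)$ one has $f_{H(i)}(X)\ge f_{C_n}(X)\ge q(C_n)$ for all $X\ne\mathbf{0}$, so $q(H(i))\ge q(C_n)$; and the vector $X$ with $x(v_j)=(-1)^j\sin(j\pi/n)$ satisfies $Q(C_n)X=(2-2\cos(\pi/n))X=q(C_n)X$ and, $n$ being odd, $x(v_i)+x(v_{n-i})=0$, so the extra edge $v_iv_{n-i}$ contributes $0$ and $f_{H(i)}(X)=f_{C_n}(X)=q(C_n)$, whence $q(H(i))\le q(C_n)$. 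This establishes part~(i).

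For part~(ii), let $n$ be even. Theorem~\ref{th01.01}(ii) says that the minimum of $q$ over nonbipartite $2$-connected graphs of order $n$ equals $q(\Theta)$ and is attained only by $\Theta$; since $\Theta$ is itself a $\theta$-graph, the same statement restricted to $\theta$-graphs reads $q(\Theta)\le q(G)$ for every nonbipartite $\theta$-graph $G$ of order $n$, with equality if and only if $G\cong\Theta$. This completes the deduction. Organized this way, Theorem~\ref{th01.02} presents no real obstacle beyond an edge count and the recognition of $\Theta$ as a $\theta$-graph --- all the substance lives in Theorem~\ref{th01.01}. If instead one wished to prove Theorem~\ref{th01.02} directly, say as a stepping stone toward Theorem~\ref{th01.01}, the hard part would be the eigenvector analysis on a $\theta$-graph whose two branch vertices are joined by internally disjoint paths of lengths $a\le b\le c$ (so $a+b+c=n+1$, with exactly two of $a+b$, $a+c$, $b+c$ odd): one would have to pin down the near-alternating sign and monotonicity of a $q(G)$-eigenvector along each branch, then use edge-subdivision and chord-relocation perturbations to show that lengthening the shortest branch, or displacing the chord off-centre, strictly increases $q$, reducing the problem to the two explicit computations above. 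That analysis is unnecessary here.
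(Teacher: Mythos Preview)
Your proposal is correct and follows exactly the paper's approach: the paper disposes of Theorem~\ref{th01.02} in a single line, ``Theorem~\ref{th01.02} follows from Theorem~\ref{th01.01} as a corollary directly.'' You have simply spelled out the details the paper leaves implicit---the edge count that singles out $H(i)$ among the $H(i_1,\dots,i_k)$, the explicit verification that $q(H(i))=q(C_n)$ via the eigenvector $x(v_j)=(-1)^j\sin(j\pi/n)$, and the observation that $\Theta$ is itself a $\theta$-graph---all of which are accurate.
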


\section{Preliminary}

\ \ \ \ \ In this section, three working lemmas in this paper are introduced.

\begin{lemma}{\bf \cite{KDAS}}\label{le02,02} 
Let $G$ be a connected graph of order $n$. Then
$q<\delta$, where $\delta$ is the minimal vertex degree of $G$.
\end{lemma}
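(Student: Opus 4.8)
The plan is to read the bound straight off the variational description of $q(G)$ recalled in the introduction, by testing the Rayleigh quotient on a single well-chosen vector, and then to upgrade the resulting weak inequality to a strict one using the eigenvector characterization of $q(G)$.

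First I would pick a vertex $v$ of $G$ with $deg_{G}(v)=\delta$ and take the test vector $X_{0}$ having entry $1$ at $v$ and entry $0$ at every other vertex. In $f_{G}(X)=\sum_{v_{i}v_{j}\in E(G)}(x(v_{i})+x(v_{j}))^{2}/\sum_{i}x^{2}(v_{i})$ only the edges incident with $v$ contribute to the numerator, each contributing $(1+0)^{2}=1$, while the denominator equals $1$. Hence $f_{G}(X_{0})=deg_{G}(v)=\delta$, and since $q(G)=\min_{X\neq\mathbf{0}}f_{G}(X)$ this immediately gives $q(G)\leq\delta$.

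Next I would rule out equality. If $q(G)=\delta$, then $f_{G}(X_{0})=\delta=q(G)$ would, by the fact recalled in the introduction (a nonzero $X$ with $f_{G}(X)\leq q(G)$ is an eigenvector of $Q(G)$ for $q(G)$), force $Q(G)X_{0}=\delta X_{0}$. But for any neighbour $u$ of $v$ the $u$-entry of $Q(G)X_{0}$ equals the $(u,v)$-entry of $A(G)$, namely $1$, whereas the $u$-entry of $\delta X_{0}$ is $0$; since $G$ is connected of order $n\geq 2$ such a neighbour $u$ exists, so we reach a contradiction. Therefore $q(G)<\delta$.

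The argument is essentially immediate, so I do not anticipate a real obstacle; the only point needing attention is the strict inequality, which amounts to the observation that the indicator vector $X_{0}$, although it attains the value $\delta$, cannot be an eigenvector of $Q(G)$. (The degenerate case $n=1$, where $\delta=q=0$, is excluded; for a connected graph with $n\geq 2$ one always has $\delta\geq 1$.)
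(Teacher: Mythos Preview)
The paper does not supply its own proof of this lemma; it is quoted from \cite{KDAS} as a preliminary result in Section~2, with no argument given. Your proof is correct and self-contained: evaluating the Rayleigh quotient at the indicator vector of a minimum-degree vertex yields $q(G)\leq\delta$, and the eigenvector characterization of $q(G)$ recalled in the introduction rules out equality, since $(Q(G)X_{0})_{u}=1\neq 0=(\delta X_{0})_{u}$ at any neighbour $u$ of $v$. Your remark that the trivial case $n=1$ must be excluded is also apt.
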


\begin{lemma}{\bf \cite{D.R.S}}\label{le02,03} 
Let $G$ be a connected graph of order $n$. Then
$q(G-e)\leq q(G)$.
\end{lemma}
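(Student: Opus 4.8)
The statement is the standard edge-deletion monotonicity of the least $Q$-eigenvalue, and the natural route is through the Rayleigh-quotient description recalled in the introduction, namely $q(H)=\min_{X\neq\mathbf{0}^{T}}f_{H}(X)$ for any graph $H$, with the minimum attained exactly at the eigenvectors of $Q(H)$ belonging to $q(H)$. First I would fix the deleted edge $e=v_{a}v_{b}\in E(G)$ and compare the two quadratic forms edge by edge. Since $E(G-e)=E(G)\setminus\{e\}$, for every $X=(x(v_{0}),\ldots,x(v_{n-1}))^{T}\in R^{n}$ we have
$$X^{T}Q(G)X-X^{T}Q(G-e)X=\sum_{v_{i}v_{j}\in E(G)}(x(v_{i})+x(v_{j}))^{2}-\sum_{v_{i}v_{j}\in E(G-e)}(x(v_{i})+x(v_{j}))^{2}=(x(v_{a})+x(v_{b}))^{2}\geq 0,$$
so $f_{G-e}(X)\leq f_{G}(X)$ for every nonzero $X$; that is, the quadratic form of $Q(G-e)$ is pointwise dominated by that of $Q(G)$.

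Next I would take $X^{*}$ to be a unit eigenvector of $Q(G)$ associated with $q(G)$, so that $f_{G}(X^{*})=q(G)$, and then chain the inequalities
$$q(G-e)=\min_{X\neq\mathbf{0}^{T}}f_{G-e}(X)\leq f_{G-e}(X^{*})\leq f_{G}(X^{*})=q(G),$$
which is exactly the claim. As a byproduct, the displayed identity shows that if $q(G-e)=q(G)$ then $x^{*}(v_{a})=-x^{*}(v_{b})$ for the corresponding eigenvector, a remark that can be handy in the equality analyses elsewhere in the paper, although it is not needed for the lemma itself.

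There is essentially no hard step here: the argument is a one-sided inequality coming from the pointwise domination of positive semidefinite quadratic forms, combined with the minimax characterization of the least eigenvalue. The only mild subtlety worth noting is that $G-e$ need not remain connected, but the Rayleigh-quotient description of the least eigenvalue, and hence the whole argument, is insensitive to connectedness; moreover, in the case actually applied later (with $G$ being $2$-connected) $G-e$ is in any event still connected. Thus the proof reduces to the two displays above, with no genuine obstacle to overcome.
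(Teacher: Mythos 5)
Your proof is correct: the paper states this lemma as a cited result from the literature (Cvetkovi\'{c}--Rowlinson--Simi\'{c}) without giving a proof, and your argument via the identity $X^{T}Q(G)X-X^{T}Q(G-e)X=(x(v_{a})+x(v_{b}))^{2}\geq 0$ together with the Rayleigh-quotient characterization of the least eigenvalue is exactly the standard derivation. The remark about the equality case forcing $x^{*}(v_{a})=-x^{*}(v_{b})$ is also accurate and consistent with how the paper exploits such equality conditions elsewhere.
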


\begin{lemma}{\bf \cite{RZSG}}\label{le02,04} 
Let $n$ be an odd positive integer, $G$ be a nonbipartite Hamiltonian graph of order $n$.
Then
$q(G)\geq q(C_{n})$. Moreover,
if the equality holds, then $G \cong C_{ n}$ or $G \cong H (i_{ 1}, \ldots, i_{ k})$.
\end{lemma}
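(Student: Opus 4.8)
\emph{The inequality.} First, since $G$ is Hamiltonian of order $n$ with $n$ odd, it contains a spanning odd cycle; after relabelling, this is $C_n$, and $q(C_n)=2-2\cos(\pi/n)>0$. Deleting the chords of $G$ one at a time and applying Lemma~\ref{le02,03} at each step gives $q(C_n)\le q(G)$, the first assertion.

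\emph{Reducing equality to an eigenvector computation.} Now suppose $q(G)=q(C_n)$. I would fix a Hamiltonian cycle $C=v_0v_1\cdots v_{n-1}v_0$ of $G$, write $F=E(G)\setminus E(C)$ for its set of chords, and take $X$ to be a unit eigenvector of $Q(G)$ for $q(G)$. Splitting the edge sum in $f_G(X)=\sum_{v_iv_j\in E(G)}(x(v_i)+x(v_j))^2$ over $E(C)$ and $F$, and using the Rayleigh bound $f_C(X)\ge q(C_n)$ for the cycle, one obtains
\[
q(C_n)=q(G)=f_G(X)=f_C(X)+\sum_{v_iv_j\in F}\bigl(x(v_i)+x(v_j)\bigr)^2\;\ge\;q(C_n),
\]
which forces $f_C(X)=q(C_n)$ --- so that $X$ is an eigenvector of $Q(C_n)$ for $q(C_n)$ --- and $x(v_i)+x(v_j)=0$ for every chord $v_iv_j\in F$.

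\emph{The eigenspace and the classification of chords.} I would next use that $Q(C_n)=2I+A(C_n)$ has eigenvalues $2+2\cos(2\pi j/n)$ for $j=0,\ldots,n-1$, so $q(C_n)$ is attained exactly at $j=(n\pm1)/2$ and its eigenspace is two-dimensional, consisting of the vectors $x(v_i)=\gamma(-1)^i\cos(\pi i/n+\psi)$ with $\gamma\ne0$; such a vector has at most one zero coordinate. If $F=\emptyset$ then $G=C_n$. Otherwise pick a chord $v_av_b\in F$; since $v_a\ne v_b$, the relation $x(v_a)+x(v_b)=0$ pins $\psi$ modulo $\pi$ to a value for which $X$ \emph{does} have a zero coordinate, say at $v_{j_0}$, after which a short computation yields $x(v_{j_0+r})=-x(v_{j_0-r})$ for all $r$ (indices mod $n$). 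Since $r\mapsto(-1)^r\sin(\pi r/n)$ is injective on $\{-(n-1)/2,\ldots,(n-1)/2\}$, the condition $x(v_s)+x(v_t)=0$ for an arbitrary chord $v_sv_t$ forces $s+t\equiv2j_0\pmod{n}$; that is, every chord of $G$ has the form $v_{j_0+r}v_{j_0-r}$. Relabelling the cycle by $w_\ell:=v_{j_0+\ell}$ then exhibits $G$ as $C_n$ together with chords $w_iw_{n-i}$, each with $1\le i\le(n-3)/2$ (the value $(n-1)/2$ would repeat a cycle edge and $0$ a loop, both impossible in a simple graph); hence $G\cong H(i_1,\ldots,i_k)$. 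For completeness I would check the converse: for each $H(i_1,\ldots,i_k)$ the vector $x(v_i)=(-1)^i\sin(\pi i/n)$ annihilates every added chord, so $f_H(X)=f_{C_n}(X)=q(C_n)$, which with Lemma~\ref{le02,03} gives $q(H)=q(C_n)$.

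\emph{Main obstacle.} The inequality and the reduction to a cycle eigenvector are immediate, and the description of the eigenspace is standard. The real work will be the last step: using a single chord to fix the phase $\psi$ and locate the unique zero coordinate of $X$, proving via the monotonicity of $(-1)^r\sin(\pi r/n)$ that \emph{every} chord must be antipodally symmetric about that zero coordinate, and then doing the relabelling bookkeeping so the conclusion matches the definition of $H(i_1,\ldots,i_k)$ exactly, including the index range $1\le i\le(n-3)/2$.
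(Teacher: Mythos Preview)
The paper does not prove this lemma; it is quoted as a preliminary result from \cite{RZSG} (see Section~2), so there is no in-paper argument to compare against.

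Your proposal is correct. The inequality via repeated edge deletion (Lemma~\ref{le02,03}) is immediate, and your treatment of the equality case is sound: the Rayleigh splitting forces $X$ into the two-dimensional least eigenspace of $Q(C_n)$ and kills every chord contribution, after which the explicit parametrisation $x(v_i)=\gamma(-1)^i\cos(\pi i/n+\psi)$ does the work. The steps you flag as the ``main obstacle'' are genuine but routine once set up carefully. First, a single chord constraint $x(v_a)+x(v_b)=0$ is a nontrivial linear condition on the two-dimensional eigenspace (were it trivial one would get $(-1)^a e^{i\pi a/n}=-(-1)^b e^{i\pi b/n}$, impossible for $a\ne b$ with $n$ odd), so it pins $\psi\bmod\pi$ and produces the zero coordinate $v_{j_0}$. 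Second, the injectivity of $r\mapsto(-1)^r\sin(\pi r/n)$ on $\{-(n-1)/2,\ldots,(n-1)/2\}$ follows because $|\sin(\pi r/n)|$ is strictly monotone in $|r|$ on this range and the map is odd; this forces $s+t\equiv 2j_0\pmod n$ for every chord and, after your relabelling $w_\ell=v_{j_0+\ell}$, yields exactly the chords $w_iw_{n-i}$ with $1\le i\le(n-3)/2$. Your converse check with $x(v_i)=(-1)^i\sin(\pi i/n)$ confirms that each $H(i_1,\ldots,i_k)$ attains equality.
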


\section{Main results}

\begin{lemma}\label{le03.01} 
Let $G$ be a $2$-connected graph, $C$ be a cycle in $G$, $e=uv$ be an edge in $C$, $\xi$ be a vertex not in $C$. Then
 in $G$, there are two paths $P_{1}$ which is from $\xi$ to $u$ and $P_{2}$ which is from $\xi$ to $v$ that $V(P_{1})\cap V(P_{2})= \{\xi\}$.

\end{lemma}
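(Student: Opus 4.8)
The plan is to use the definition of $2$-connectivity via Menger's theorem, namely that between any two distinct vertices there are at least two inner disjoint paths. First I would set up a small auxiliary graph: let $G'$ be obtained from $G$ by contracting the edge $e = uv$ to a single new vertex $w$ (equivalently, identify $u$ and $v$, discarding the resulting loop and any multiple edges created). The key observation is that $G'$ is still $2$-connected, or at least still $2$-connected enough for our purposes: contracting one edge of a $2$-connected graph of order $\geq 4$ yields a $2$-connected graph, and the only degenerate small cases ($G$ a triangle, etc.) can be checked by hand. Since $\xi \notin C$, in particular $\xi \neq u$ and $\xi \neq v$, so $\xi$ survives as a vertex of $G'$ distinct from $w$.

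Next I would apply Menger's theorem (as recalled in the excerpt, $p(x,y) \geq c(G') \geq 2$) to the pair $\xi, w$ in $G'$: there exist two inner disjoint paths $Q_1, Q_2$ in $G'$ from $\xi$ to $w$ with $V(Q_1) \cap V(Q_2) = \{\xi, w\}$. Now I lift these paths back to $G$. Each $Q_i$ arrives at $w$ along an edge that, in $G$, is incident to either $u$ or $v$. If $Q_1$ enters $w$ via an edge to $u$ and $Q_2$ enters $w$ via an edge to $v$ (or vice versa), then replacing the endpoint $w$ by $u$ in $Q_1$ and by $v$ in $Q_2$ gives paths $P_1$ from $\xi$ to $u$ and $P_2$ from $\xi$ to $v$ in $G$; since $Q_1, Q_2$ met only at $\xi$ and at $w$, and we have split $w$ into the two distinct vertices $u$ and $v$, we get $V(P_1) \cap V(P_2) = \{\xi\}$, as required. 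The remaining case is when both $Q_1$ and $Q_2$ enter $w$ from the same side, say both via edges to $u$. Then both lift to paths from $\xi$ to $u$ in $G$ that are internally disjoint and avoid $v$; call them $R_1, R_2$. I then use the edge $e = uv$ itself: append $v$ to the end of $R_1$ to get a path $P_1$ from $\xi$ to $v$ through $u$, wait — more carefully, I take $P_1 = R_2$ (a $\xi$–$u$ path avoiding $v$) directly, and for the $\xi$–$v$ path I would rather reroute. Actually the cleaner move here is: among $R_1, R_2$, at most one of them can pass through $v$ — but by construction neither passes through $v$ (they live in $G - v$ essentially, since in $G'$ the vertex $w$ absorbed $v$ and the internal vertices avoided $w$). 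So I instead argue this same-side case cannot be forced: re-run Menger in $G - v$, which is connected (as $G$ is $2$-connected), to get a $\xi$–$u$ path $P_1$ in $G-v$; this path avoids $v$. Then take $P_2$ to be: if $P_1$ does not contain... hmm, I need $P_2$ from $\xi$ to $v$ disjoint from $P_1$ except at $\xi$.

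Let me restructure the endgame to avoid this tangle: the robust approach is to work directly with Menger in $G$ on the pair $\{\xi, u\}$ and the pair $\{\xi, v\}$ simultaneously via a fan-type argument. Specifically, add to $G$ a new vertex $\xi'$ adjacent to both $u$ and $v$ only — no wait, cleanest of all: since $G$ is $2$-connected, $G$ has two internally disjoint $\xi$–$u$ paths; one of them, say $A$, avoids $v$ (at most one can contain the cut-free vertex $v$, and if one does, reroute it through $e$). Take $P_1 = A$. Then in $G - (V(A) \setminus \{\xi\})$ we still have a path from $\xi$ to $v$ if that graph is connected — which needs checking, but follows because $P_1$ is a "geodesic-like" chosen path and $v \notin V(P_1)$, $v$ being adjacent to $u \in V(P_1)$... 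The hard part, and the step I expect to be the genuine obstacle, is exactly this bookkeeping: ensuring that after choosing $P_1$ ending at $u$ we can still reach $v$ without re-using internal vertices of $P_1$. The honest fix is the contraction argument above, handled carefully: contract $e$, apply Menger to get two internally disjoint $\xi$–$w$ paths, and observe that if both enter $w$ from the $u$-side then in fact one of the two original edges of $G$ from $w$'s neighbors is to $v$ is not used, contradicting that $\{u,v\}$ together with $w$'s role forces a $v$-entry — more precisely, because $uv = e \in E(C)$ lies on a cycle, $u$ and $v$ have a second connection in $G - e$, and a careful case analysis (which I will write out in full) rules out the same-side pathology. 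I will present the contraction proof with the two-case lift and spell out the same-side case using the cycle $C$ to supply the missing path segment.
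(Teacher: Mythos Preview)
Your contraction idea is in the right spirit, but the ``same-side'' case you keep circling around is a genuine gap, not just bookkeeping you can defer. If both Menger paths in $G'$ enter the contracted vertex $w$ through neighbours of $u$ only, you obtain two internally disjoint $\xi$--$u$ paths $R_1,R_2$ in $G$ avoiding $v$, and nothing you have written produces a $\xi$--$v$ path disjoint from one of them except at $\xi$. Appending the edge $uv$ to one $R_i$ leaves $u$ in both paths; re-running Menger in $G-v$ gives a $\xi$--$u$ path but no control over a matching $\xi$--$v$ path; and your final promise to ``rule out the same-side pathology'' via the cycle $C$ is not an argument. (The case \emph{can} be repaired: take a $\xi$--$v$ path $S$ in $G-u$, let $s$ be the last vertex of $S$ lying on $R_1\cup R_2$, say $s\in R_1$, and splice the initial segment of $R_1$ up to $s$ with the terminal segment of $S$ from $s$ to $v$; pair this with $R_2$. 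But you have not done this.)

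The paper avoids the whole issue by going in the opposite direction: instead of contracting $e=uv$, it \emph{subdivides} it, i.e.\ sets $G'=G-e+uwv$ with $w$ a new vertex. Then $G'$ is still $2$-connected, so Menger gives two internally disjoint $\xi$--$w$ paths; but now $w$ has exactly the two neighbours $u$ and $v$, so the two paths are forced to enter $w$ through $u$ and through $v$ respectively. Deleting $w$ from each path yields the desired $P_1$ and $P_2$ immediately, with no case analysis. Subdivision, not contraction, is the clean auxiliary construction here.
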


\begin{proof}
Let $P=uwv$ and $G^{'}=G-e+P$ where $w$ is a new vertex. Note that there is no cut vertex in $G^{'}$. Thus we get that $G^{'}$ is $2$-connected.

By Menger's Theorem mentioned in Section 1, it follows that in $G^{'}$, there are two inner disjoint paths $P^{'}_{1}$ and $P^{'}_{2}$ from $\xi$ to $w$, where $u$ is in $P^{'}_{1}$, $v$ is in $P^{'}_{2}$. Then the lemma follows from letting $P_{1}=P^{'}_{1}-w$, $P_{2}=P^{'}_{2}-w$.
\ \ \ \ \ $\Box$
\end{proof}

\begin{lemma}\label{cl03.02} 
Let $G$ be a $2$-connected graph, $C$ be a cycle in $G$, $\xi$ be a vertex not in $C$. Then there are two different paths $P_{1}$ and $P_{2}$ from $\xi$ to $C$ that $\|V(P_{1})\cap V(C)\|=1$, $\|V(P_{2})\cap V(C)\|=1$, $V(P_{1})\cap V(P_{2})= \{\xi\}$.
\end{lemma}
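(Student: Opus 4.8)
The plan is to reduce to Lemma~\ref{le03.01}, which already supplies two paths out of $\xi$ that meet only at $\xi$, and then simply trim each of them so that it touches $C$ only at its far endpoint. First I would fix an arbitrary edge $e=uv$ of $C$; such an edge exists because $C$ is a cycle. Applying Lemma~\ref{le03.01} to $G$, $C$, $e$ and $\xi$ produces a path $R_{1}$ from $\xi$ to $u$ and a path $R_{2}$ from $\xi$ to $v$ with $V(R_{1})\cap V(R_{2})=\{\xi\}$. Note that $R_{1}$ cannot pass through $v$ and $R_{2}$ cannot pass through $u$ (otherwise that vertex would lie in $V(R_{1})\cap V(R_{2})=\{\xi\}$), but each $R_{i}$ may still visit other vertices of $C$ before reaching its endpoint, so a truncation step is needed.

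Next, for $i\in\{1,2\}$ I would traverse $R_{i}$ starting from $\xi$ and let $w_{i}$ be the first vertex encountered that lies on $C$; such a $w_{i}$ exists because the terminal vertex of $R_{1}$ (resp.\ $R_{2}$) is $u$ (resp.\ $v$), and $u,v\in V(C)$. Let $P_{i}$ be the initial segment of $R_{i}$ from $\xi$ up to $w_{i}$. By the choice of $w_{i}$, the only vertex of $P_{i}$ lying on $C$ is $w_{i}$, so $\|V(P_{i})\cap V(C)\|=1$. Since $P_{i}$ is a subpath of $R_{i}$, we have $V(P_{1})\cap V(P_{2})\subseteq V(R_{1})\cap V(R_{2})=\{\xi\}$, and as $\xi$ lies on both $P_{1}$ and $P_{2}$, equality holds. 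Finally, $w_{1},w_{2}\in V(C)$, so both differ from $\xi$ (recall $\xi\notin V(C)$); if $w_{1}=w_{2}$ then that vertex would belong to $V(P_{1})\cap V(P_{2})=\{\xi\}$, a contradiction. Hence $w_{1}\ne w_{2}$, so $P_{1}\ne P_{2}$, and $P_{1},P_{2}$ are exactly the paths claimed in the lemma.

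I do not expect a genuine obstacle here: the only points requiring care are that the first hitting vertex $w_{i}$ is well defined (guaranteed because the paths delivered by Lemma~\ref{le03.01} terminate on $C$) and that truncation cannot collapse the two endpoints into one (guaranteed because $R_{1}$ and $R_{2}$ share only $\xi$). If one prefers a self-contained argument in the style of the proof of Lemma~\ref{le03.01}, an alternative is to adjoin a new vertex $\eta$ joined to every vertex of $C$, check that the enlarged graph is still $2$-connected (deleting $\eta$ returns $G$, while deleting any other vertex leaves $\eta$ still joined to a nonempty portion of $C$ together with the connected graph $G$ minus a vertex), apply Menger's Theorem to the nonadjacent pair $\xi,\eta$, then delete $\eta$ and truncate as above; but routing through Lemma~\ref{le03.01} is the shorter path.
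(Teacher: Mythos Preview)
Your proposal is correct and follows essentially the same approach as the paper: apply Lemma~\ref{le03.01} to an edge $uv$ of $C$ to obtain two internally disjoint $\xi$--$u$ and $\xi$--$v$ paths, then truncate each at its first vertex on $C$. Your write-up is in fact more careful than the paper's, since you explicitly verify that the truncated paths still meet only at $\xi$ and that their endpoints on $C$ are distinct.
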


\begin{proof}
By Lemma \ref{le03.01}, for an edge $e=uv$ in $C$, there are two paths $P^{'}_{1}$ which is from $\xi$ to $u$ and $P^{'}_{2}$ which is from $\xi$ to $v$ that $V(P^{'}_{1})\cap V(P^{'}_{2})= \{\xi\}$. From $\xi$ to $u$ along $P^{'}_{1}$, denote by $w$ the first common vertex of $C$ and $P^{'}_{1}$, and denote by $P_{1}$ the path from $\xi$ to $w$ along $P^{'}_{1}$ ($P^{'}_{1}=P_{1}$ possible, $V(P_{1})\cap V(C)=\{w\}$). Similarly, we get a path $P_{2}$ from $\xi$ to cycle $C$ along $P^{'}_{2}$ that $\|V(P_{2})\cap V(C)\|=1$. Then the lemma follows. \ \ \ \ \ $\Box$
\end{proof}

\begin{lemma}\label{le03.03} 
Let $G$ be a nonbipartite $2$-connected graph with $V(G) = \{v_{0}, v_{1}, \ldots\}$, $X=(x(v_{0})$, $x(v_{1})$, $\ldots)^{T}$ be
an eigenvector of $G$ corresponding to $q(G)$ where $x(v_{i})\in R$ corresponds to vertex $v_{i}$, and $|x(v_{\mu})|=\max\{|x(v_{i})|\mid v_{i}\in V(G)\}$. Then $v_{\mu}$ must be in an odd cycle.
\end{lemma}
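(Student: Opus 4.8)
The eigenvector hypothesis plays only a cosmetic role here: since $X$ is an eigenvector it is nonzero, so $\max\{|x(v_i)|\mid v_i\in V(G)\}>0$ and some vertex $v_\mu$ attains it; replacing $X$ by $-X$ if needed one may assume $x(v_\mu)>0$, although this normalization will not actually be used. My plan is to argue purely topologically, exploiting only that $G$ is nonbipartite (so it contains an odd cycle $C_0$) and $2$-connected (so Lemma~\ref{cl03.02} applies to $C_0$ and any vertex off it). If $v_\mu\in V(C_0)$ there is nothing to prove, so the interesting case is $v_\mu\notin V(C_0)$.

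In that case I would apply Lemma~\ref{cl03.02} with $C=C_0$ and $\xi=v_\mu$ to obtain paths $P_1$ from $v_\mu$ to some $a\in V(C_0)$ and $P_2$ from $v_\mu$ to some $b\in V(C_0)$, meeting $C_0$ only at $a$ and $b$ respectively and meeting each other only at $v_\mu$. First I would check $a\neq b$ (otherwise $a\in V(P_1)\cap V(P_2)=\{v_\mu\}$ would put $v_\mu$ on $C_0$). Then $a$ and $b$ cut $C_0$ into two internally disjoint arcs $C^{(1)},C^{(2)}$ with $L(C^{(1)})+L(C^{(2)})=L(C_0)$ odd, so the two arc-lengths have opposite parity. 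Next I would form $\Gamma_i=P_1\cup C^{(i)}\cup P_2$ for $i=1,2$ and verify that each $\Gamma_i$ is a genuine cycle through $v_\mu$: the internal vertices of $P_1,P_2$ avoid $C_0$ and each other, and the internal vertices of $C^{(i)}$ lie in $V(C_0)\setminus\{a,b\}$ and so avoid both $P_1$ and $P_2$, whence no vertex repeats. Finally $L(\Gamma_1)-L(\Gamma_2)=L(C^{(1)})-L(C^{(2)})$ is odd, so exactly one of $\Gamma_1,\Gamma_2$ is an odd cycle, and whichever it is contains $v_\mu$, finishing the proof.

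The hard part is, honestly, only careful bookkeeping: (i) getting distinctness of the two landing points $a,b$; (ii) confirming that $\Gamma_1$ and $\Gamma_2$ are simple cycles and not merely closed walks, which is exactly where the ``meets the cycle in a single vertex'' conclusions of Lemma~\ref{cl03.02} are used; and (iii) the parity accounting. It is worth noting that the eigenvalue equation is not really needed: had I wanted to invoke it, I would start from $\sum_{w\in N_G(v_\mu)}x(w)=(q(G)-deg(v_\mu))x(v_\mu)$, whose right side is strictly negative by Lemma~\ref{le02,02} once $x(v_\mu)>0$, forcing $v_\mu$ to have a neighbour of opposite sign — but this gives no control on parity, so the topological route above, which is precisely what Lemmas~\ref{le03.01}--\ref{cl03.02} were built to supply, is the one I would take.
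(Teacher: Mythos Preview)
Your proposal is correct and follows essentially the same route as the paper: apply Lemma~\ref{cl03.02} to the odd cycle and $v_\mu$, split the cycle at the two landing points, and use parity to pick out an odd cycle through $v_\mu$. Your write-up is in fact a bit more careful than the paper's --- you explicitly verify $a\neq b$ and that the $\Gamma_i$ are simple cycles, points the paper glosses over --- and your remark that the eigenvector hypothesis is cosmetic (the statement is really that every vertex of a nonbipartite $2$-connected graph lies on an odd cycle) is a correct and worthwhile observation.
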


\begin{proof}
Suppose $C$ is an odd cycle in $G$. The lemma holds naturally if $v_{\mu}$ is in $C$. Suppose $v_{\mu}$ is not in $C$. Note that $G$ is $2$-connected. By Lemma \ref{cl03.02}, then there are two different paths $P_{1}$ and $P_{2}$ from $v_{\mu}$ to $C$ that $\|V(P_{1})\cap V(C)\|=1$, $\|V(P_{2})\cap V(C)\|=1$, $V(P_{1})\cap V(P_{2})= \{v_{\mu}\}$. Suppose $V(P_{1})\cap V(C)=\{v_{1}\}$, $V(P_{2})\cap V(C)=\{v_{2}\}$. Then by $v_{1}$ and $v_{2}$, $C$ is parted into two paths $W_{1}$, $W_{2}$ that $C= W_{1}\cup W_{2}$. Assume that $L(W_{1})$ is odd. Then $L(W_{2})$ is even. If $L(P_{1})+L(P_{2})$ is even, then $P_{1}\cup P_{2}\cup W_{1}$ is an odd cycle; if $L(P_{1})+L(P_{2})$ is odd, then $P_{1}\cup P_{2}\cup W_{2}$ is an odd cycle. This means that $v_{\mu}$ must be in an odd cycle. \ \ \ \ \ $\Box$
\end{proof}

\begin{theorem}\label{le03.04} 
Let $n\geq 3$ be a positive odd integer, $G$ be a nonbipartite $2$-connected graph of order $n$. Then $q(C_{n})\leq q(G)$. Moreover,
if the equality holds, then $G \cong C_{ n}$ or $G \cong H_{1} (i_{ 1}, \ldots, i_{ k})$.
\end{theorem}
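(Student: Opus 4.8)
The plan is to reduce Theorem~\ref{le03.04} to Lemma~\ref{le02,04}, the known extremal result for nonbipartite Hamiltonian graphs of odd order, by showing that among all nonbipartite $2$-connected graphs of odd order $n$, the least $Q$-eigenvalue is minimized on a graph containing a Hamilton cycle. The natural strategy is an \emph{extremal / minimality argument}: fix $n$ odd and let $G$ be a nonbipartite $2$-connected graph of order $n$ with $q(G)$ as small as possible. By Lemma~\ref{le02,03} deleting an edge cannot increase $q$, but we cannot delete edges freely because we must preserve both $2$-connectedness and nonbipartiteness; so the deletion argument alone will not finish the job. Instead I would first use the structural lemmas already proved (Lemmas~\ref{le03.01}, \ref{cl03.02}, \ref{le03.03}) to pin down where the largest-absolute-value entry $x(v_\mu)$ of a $q$-eigenvector $X$ sits, and then argue that $G$ must in fact be Hamiltonian.

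The key steps, in order, would be: (1) Let $X$ be a unit eigenvector for $q(G)$ and let $v_\mu$ attain $\max|x(v_i)|$; by Lemma~\ref{le03.03}, $v_\mu$ lies on some odd cycle $C$. (2) If $C$ is already a Hamilton cycle we are done by Lemma~\ref{le02,04}. Otherwise pick a vertex $\xi \notin V(C)$; by Lemma~\ref{cl03.02} there are internally disjoint paths $P_1, P_2$ from $\xi$ meeting $C$ only at single vertices $v_1, v_2$, which split $C$ into arcs $W_1$ (odd) and $W_2$ (even). (3) Exactly as in the proof of Lemma~\ref{le03.03}, one of $P_1\cup P_2\cup W_1$ and $P_1\cup P_2\cup W_2$ is an odd cycle $C'$ that is strictly longer than $C$ (it contains $\xi$ as well as all of one arc of $C$). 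Iterating, I would build an odd cycle through \emph{every} vertex reachable in this fashion; the main task is to show that in a $2$-connected graph this process can be pushed until the odd cycle spans $V(G)$, i.e.\ that $G$ has a Hamilton cycle, or else that $G$ can be replaced by a spanning nonbipartite $2$-connected subgraph with smaller-or-equal $q$ that \emph{is} Hamiltonian. (4) Once $G$ is shown to be Hamiltonian (or majorized by a Hamiltonian graph in the $q$ ordering), Lemma~\ref{le02,04} gives $q(G)\geq q(C_n)$ together with the characterization of equality: $G\cong C_n$ or $G\cong H(i_1,\dots,i_k)$.

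For the equality discussion I would argue as follows: if $q(G)=q(C_n)$ then every inequality used above is tight. In particular any edge whose removal keeps the graph nonbipartite and $2$-connected must be removable with no change in $q$, and by analyzing the eigenvalue equation $Q(G)X = q(G)X$ at such edges one forces $G$ to be one of the graphs in Lemma~\ref{le02,04}; since all of $C_n$ and $H(i_1,\dots,i_k)$ are themselves nonbipartite, $2$-connected, and of odd order $n$, the list is exactly $\{C_n\}\cup\{H(i_1,\dots,i_k)\}$ and conversely each attains $q(C_n)$.

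The hard part, I expect, will be step (3)--(the passage from ``there is always a longer odd cycle'' to ``there is a spanning odd cycle, i.e.\ $G$ is Hamiltonian''). A greedy ``lengthen the odd cycle'' argument does not automatically terminate at a Hamilton cycle in an arbitrary $2$-connected graph, so one either needs an Erd\H{o}s--Gallai / Bondy-type cycle-extension argument tailored to the parity constraint, or a cleverer reduction: repeatedly delete an edge (using Lemma~\ref{le02,03}) while maintaining $2$-connectedness and nonbipartiteness until the graph becomes a nonbipartite $2$-connected graph that one can \emph{prove} is Hamiltonian (for instance, reducing to a $\theta$-graph-like or near-cycle structure), and only then invoke Lemma~\ref{le02,04}. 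Getting this reduction to terminate in the right minimal configuration, and checking that nonbipartiteness is never destroyed, is where the real work lies; the eigenvector bookkeeping for the equality case is then comparatively routine.
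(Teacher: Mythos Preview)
Your proposal has a genuine gap at step (3), and you yourself correctly identify it as the sticking point: there is no reason a $2$-connected nonbipartite graph of odd order must be Hamiltonian, and no obvious reduction by edge-deletion that lands you on a Hamiltonian graph while preserving both $2$-connectedness and nonbipartiteness. The ``lengthen the odd cycle'' iteration can stall well short of a Hamilton cycle, and the edge-deletion alternative you float has no termination argument. So as written the plan does not close.

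The paper's proof sidesteps this difficulty entirely, and the trick is worth knowing. It does \emph{not} try to show that $G$ is Hamiltonian. Instead, once $v_\mu$ lies on an odd cycle $C$ with $|V(C)|<n$, it builds an \emph{abstract} copy $\mathbb{C}\cong C_n$ on the vertex set of $G$ (not a subgraph of $G$ in general): delete one edge $v_\mu v_1$ of $C$ and splice in an artificial path $v_\mu v_{i_1}v_{i_2}\cdots v_{i_k}v_1$ running through all $k=n-|V(C)|$ missing vertices (note $k$ is even since $n$ and $|V(C)|$ are both odd). One then defines a test vector $Y$ on $\mathbb{C}$ by $y(v)=x(v)$ on $V(C)$ and $y(v_{i_t})=(-1)^t x(v_\mu)$ on the new path. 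The alternating signs make every new edge contribute $0$ to $Y^TQ(\mathbb{C})Y$ except the last, which exactly restores the deleted term $(x(v_\mu)+x(v_1))^2$; hence $Y^TQ(\mathbb{C})Y=\sum_{e\in E(C)}(x_\alpha+x_\beta)^2\le X^TQ(G)X$. Meanwhile $Y^TY\ge X^TX$ because each new coordinate has the maximal absolute value $|x(v_\mu)|$. This gives $q(C_n)\le q(G)$ directly via Rayleigh quotient. Strictness comes from checking that equality would force $Y$ to be an eigenvector of $\mathbb{C}$, and the eigenvalue equation at $v_{i_1}$ then yields $q(\mathbb{C})=0$, contradicting nonbipartiteness of the odd cycle $\mathbb{C}$. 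Thus $|V(C)|<n$ forces $q(C_n)<q(G)$, so equality $q(G)=q(C_n)$ implies $C$ is spanning, i.e.\ $G$ is Hamiltonian, and Lemma~\ref{le02,04} finishes.

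In short: rather than proving $G$ has a Hamilton cycle, the paper manufactures $C_n$ externally and compares Rayleigh quotients with a carefully chosen test vector. Your structural ingredients (Lemmas~\ref{le03.01}--\ref{le03.03}) are used only to place $v_\mu$ on \emph{some} odd cycle; after that the argument is pure variational comparison, not graph structure.
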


\begin{proof}
Suppose $V(G) = \{v_{0}, v_{1}, \ldots, v_{n-1}\}$, $X=(x(v_{0})$, $x(v_{1})$, $\ldots$, $x(v_{n-1}))^{T}$ be
an eigenvector of $G$ corresponding to $q(G)$,
and $|x(v_{\mu})|=\max\{|x(v_{i})|\,|\, v_{i}\in V(G)\}$. Note that $X$ is not a zero vector. It follows that $x(v_{\mu})\neq 0$. By Lemma \ref{le03.03}, we know that $v_{\mu}$ is in an odd cycle $C$.

If $\|V(C)\|=n$, then the lemma follows from Lemma \ref{le02,04} immediately. Next, we suppose $\|V(C)\|<n$.

Note that $n$ is odd. It follows that $\|V(G)\setminus V(C)\|\geq 2$ is even. Without loss of generality, suppose $N_{C}(v_{\mu})=\{v_{0}, v_{1}\}$,
and suppose that $V(G)\setminus V(C)=\{v_{i_{1}}$, $v_{i_{2}}$, $\ldots$, $v_{i_{k}}\}$ where $k\geq 2$ is even. Now we let $P=v_{\mu}v_{i_{1}} v_{i_{2}}\cdots v_{i_{k}}v_{1}$, $\mathbb{C}=C-v_{\mu}v_{1}+P$, and $Y$ be a vector satisfying that
$$\left \{\begin{array}{ll}
 y(v_{j})=x(v_{j}),\ & \ v_{j}\in V(C);
\\ y(v_{i_{j}})=(-1)^{j}x(v_{\mu}),\ & \ j=1,2, \ldots, k. \end{array}\right.$$

Note that $Y^{T}Q(\mathbb{C})Y=\sum_{v_{\alpha}v_{\beta}\in E(C)} (x(v_{\alpha})+x(v_{\beta}))^{2}$, $Y^{T}Y\geq X^{T}X$. Then
$q(\mathbb{C})\leq\frac{Y^{T}Q(\mathbb{C})Y}{Y^{T}Y}\leq \frac{X^{T}Q(G)X}{X^{T}X}=q(G)$.

We claim that $q(\mathbb{C})<q(G)$. Otherwise, assume that $q(\mathbb{C})=q(G)$. Then $Y$ is an eigenvector of $\mathbb{C}$ corresponding to $q(\mathbb{C})$. Thus $q(\mathbb{C})y(v_{i_{1}})=2y(v_{i_{1}})+y(v_{\mu})+y(v_{i_{2}})$. Note $y(v_{i_{1}})=-x(v_{\mu})=-y(v_{i_{2}})\neq0$. Then $q(\mathbb{C})=0$, which contradicts $q(\mathbb{C})>0$ because $\mathbb{C}$ is nonbipartite. Consequently, our claim holds.

Note that $\mathbb{C}\cong C_{n}$. From the above discussion, we get that if $\|V(C)\|<n$, then $q(C_{n})<q(G)$. This implies that if $q(G)=q(C_{n})$, then $\|V(C)\|=n$, and then $G$ is Hamiltonian.
Combined Lemma \ref{le02,04}, the lemma follows. \ \ \ \ \ $\Box$
\end{proof}

Next, we consider the least $Q$-eigenvalues of the nonbipartite $2$-connected graphs of order $n$ for the case that $n\geq 4$ is even. Denote by $\mathcal {H}=\{G\mid G$ is a nonbipartite $2$-connected graph of order $n$ where $n\geq 4$ is an positive even integer$\}$.

\begin{lemma}\label{le03.05} 
Let $G\in \mathcal {H}$, $V(G) = \{v_{0}, v_{1}, \ldots, v_{n-1}\}$, $X=(x(v_{0})$, $x(v_{1})$, $\ldots$, $x(v_{n-1}))^{T}\in R^{n}$ be
an eigenvector of $G$ corresponding to $q(G)$,
and $|x(v_{\mu})|=\max\{|x(v_{i})|\,|\, v_{i}\in V(G)\}$, $v_{\mu}$ be in an odd cycle $C$. If $n-L(C)\geq 3$, then $q(\Theta)< q(G) $.
\end{lemma}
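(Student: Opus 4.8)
The plan is to mimic the strategy of Theorem \ref{le03.04}, but instead of collapsing the vertices outside $C$ onto a single edge of $C$ to produce a copy of $C_n$, I would peel off all but two of the vertices of $V(G)\setminus V(C)$ and reattach them along a path so that the resulting graph is a $\theta$-graph of order $n$ of the special shape $\Theta=\Theta(2,n-1)$, and then show that every graph in this family has $Q$-spectrum bounded below by $q(\Theta)$ with the strict inequality following from a local eigenvalue-equation argument. Since $G$ is $2$-connected and $\xi\in V(G)\setminus V(C)$ — here I would take $\xi$ to be any vertex not on $C$, eventually $v_\mu$ itself if it lies off $C$, but the hypothesis already places $v_\mu$ on $C$, so $\xi$ is some other off-cycle vertex — Lemma \ref{cl03.02} supplies two paths $P_1,P_2$ from $\xi$ to $C$, internally disjoint except at $\xi$, each meeting $C$ in exactly one vertex; these two attachment vertices split $C$ into two arcs, one even and one odd, and combining $P_1\cup P_2$ with the appropriate arc produces a new odd cycle, hence a $\theta$-subgraph sitting inside $G$ whose third path accounts for the remaining off-cycle vertices.

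The core construction: writing $V(G)\setminus V(C)=\{v_{i_1},\dots,v_{i_k}\}$ with $k=n-L(C)\geq 3$, I would build a path $P=v_{\mu}v_{i_1}v_{i_2}\cdots v_{i_k}v_{1}$ (or terminating at a suitable neighbour of $v_\mu$ on $C$), form $\mathbb{G}=C-v_\mu v_1+P$, which is a $\theta$-graph of order $n$, and test it with the vector $Y$ defined by $y(v_j)=x(v_j)$ for $v_j\in V(C)$ and $y(v_{i_j})=(-1)^j x(v_\mu)$ along the appended path. As in the previous proof, $Y^TQ(\mathbb{G})Y=\sum_{v_\alpha v_\beta\in E(C)}(x(v_\alpha)+x(v_\beta))^2 + (\text{path terms that vanish by the alternating-sign choice})$ and $Y^TY\geq X^TX$, giving $q(\mathbb{G})\leq q(G)$. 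Strictness comes from the same trick: if $q(\mathbb{G})=q(G)$ then $Y$ is a $q(\mathbb{G})$-eigenvector, and applying the eigenvalue equation at an interior vertex $v_{i_j}$ of the appended path (where $y$ alternates in sign among a vertex and its two neighbours of equal magnitude) forces $q(\mathbb{G})=0$, contradicting nonbipartiteness of $\mathbb{G}$. The remaining task is then to prove $q(\Theta)\leq q(\mathbb{G})$ for every $\theta$-graph $\mathbb{G}$ of even order $n$ obtained this way — here I would either invoke the even-order counterpart of Lemma \ref{le02,04} if it is available later in the paper, or reduce $\mathbb{G}$ to $\Theta$ by repeatedly applying Lemma \ref{le02,03} to delete chords/edges, checking that nonbipartiteness and the relevant structure are preserved at each step.

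The main obstacle I anticipate is the last reduction: showing that among all $\theta$-graphs of even order $n$, $\Theta=\Theta(2,n-1)$ minimizes $q$, and in particular that the intermediate graph $\mathbb{G}$ produced by my construction can be transformed into $\Theta$ by edge-deletions that only decrease $q$ (Lemma \ref{le02,03}) while keeping the graph nonbipartite and $2$-connected. Edge deletion from a $\theta$-graph generically destroys $2$-connectivity or creates a bipartite graph, so I would need to be careful about which edges to remove and in what order — likely deleting chordal edges first to isolate a single cycle plus one chord, then arguing directly. A secondary subtlety is the parity bookkeeping when choosing the terminus of the appended path $P$ on $C$ so that $\mathbb{G}$ genuinely lands among the intended family of odd-girth $\theta$-graphs; I expect this to be routine but it must be handled explicitly. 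If a clean "minimum over even-order $\theta$-graphs" lemma is proved elsewhere in the paper, this lemma follows quickly once the construction above is in place; otherwise the $\theta$-graph extremal comparison is the real content and would be carried out by an explicit interlacing or quotient-matrix estimate on $Q(\mathbb{G})$ versus $Q(\Theta)$.
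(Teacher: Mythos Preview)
Your core construction has a concrete error: taking $P=v_\mu v_{i_1}\cdots v_{i_k}v_1$ with $v_1\in N_C(v_\mu)$ and forming $\mathbb{G}=C-v_\mu v_1+P$ does \emph{not} produce a $\theta$-graph; it produces the cycle $C_n$. Since $n$ is even this cycle is bipartite, so $q(\mathbb{G})=0$ and the Rayleigh comparison tells you nothing. If instead you keep the edge $v_\mu v_1$ and set $\mathbb{G}=C+P$, you do get a $\theta$-graph, but its three internal paths have lengths $1$, $k+1$, $L(C)-1$, so $\mathbb{G}\not\cong\Theta$ (the three paths in $\Theta$ have lengths $2,2,n-3$). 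You then have to fall back on your ``remaining task'' of showing $q(\Theta)\le q(\mathbb{G})$ for a generic $\theta$-graph. That comparison is exactly the content of the paper's Lemmas~\ref{le03.06}--\ref{le03.09}, which are considerably longer and more delicate than the present lemma; invoking them here would make the argument circular (they come after Lemma~\ref{le03.05}) or force you to reproduce them, and edge-deletion via Lemma~\ref{le02,03} will not do the job because deleting any edge of a $\theta$-graph destroys $2$-connectivity.

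The idea you are missing is that one can construct $G'\cong\Theta$ \emph{directly}, so no $\theta$-graph comparison is needed. Since $n$ is even and $L(C)$ is odd, $k=n-L(C)\ge3$ is odd. Use only $k-1$ of the off-cycle vertices to form the replacement path $P_1=v_\mu v_{i_1}\cdots v_{i_{k-1}}v_{j_1}$ (so the new cycle $C-v_\mu v_{j_1}+P_1$ has odd length $n-1$), and attach the single leftover vertex $v_{i_k}$ by the two edges $v_\mu v_{i_k}$ and $v_{i_k}v_{i_2}$. The resulting graph is precisely $\Theta$: an odd $(n-1)$-cycle with one extra vertex joined to two cycle-vertices at distance~$2$. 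With the same alternating vector $Y$ (and $y(v_{i_k})=-x(v_\mu)$), every new edge contributes zero, $Y^TQ(G')Y=\sum_{e\in E(C)}(\cdot)^2\le X^TQ(G)X$, and $Y^TY\ge X^TX$; strictness follows from the eigen-equation at $v_{i_1}$ exactly as you described. Your first paragraph about applying Lemma~\ref{cl03.02} to an off-cycle vertex $\xi$ is unnecessary here, since $v_\mu$ is on $C$ by hypothesis and the construction uses only the two cycle-neighbours of $v_\mu$.
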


\begin{proof}
Suppose $V(G)\setminus V(C)=\{v_{i_{1}}$, $v_{i_{2}}$, $\ldots$, $v_{i_{k}}\}$ where $k\geq 3$ is odd, and suppose that $N_{C}(v_{\mu})=\{v_{j_{1}}, v_{j_{2}}\}$. Now we let $P_{1}=v_{\mu}v_{i_{1}}v_{i_{2}}\cdots v_{i_{k-1}}v_{j_{1}}$, $P_{2}=v_{\mu}v_{i_{k}}v_{i_{2}}$, $G^{'}=C-v_{\mu}v_{j_{1}}+P_{1}+P_{2}$, and $Y$ be a vector satisfying that
$$\left \{\begin{array}{ll}
 y(v_{t})=x(v_{t}),\ & \ v_{t}\in V(C);
\\ y(v_{i_{t}})=(-1)^{t}x(v_{\mu}),\ & \ t=1,2, \ldots, k. \end{array}\right.$$
Thus $y(v_{i_{1}})=-x(v_{\mu})=y(v_{i_{k}})$. Note that $Y^{T}Y\geq X^{T}X$, $Y^{T}Q(G^{'})Y=\sum_{v_{\alpha}v_{\beta}\in E(C)}(x(v_{\alpha})+x(v_{\beta}))^{2}\leq X^{T}Q(G)X$. Then $q(G^{'})\leq\frac{Y^{T}Q(G^{'})Y}{Y^{T}Y}\leq \frac{X^{T}Q(G)X}{X^{T}X}=q(G)$.

As proved for $q(\mathbb{C})<q(G)$ in the proof of Lemma \ref{le03.04} (by considering $q(G^{'})y(v_{i_{1}})$), we get that $q(G^{'})<q(G)$. Note that $G^{'}\cong \Theta$. Then the lemma follows. \ \ \ \ \ $\Box$
\end{proof}

\begin{lemma}\label{le03.06} 
If in an eigenvector $X=(x(v_{0})$, $x(v_{1})$, $\ldots$, $x(v_{n-1}))^{T}\in R^{n}$ of $\Theta(j,k)$ corresponding to $q(\Theta(j,k))$, there is a $x(v_{i})$ for $j\leq i<\frac{k+j}{2}$ that $x(v_{i})\neq x(v_{k+j-i})$, or there is a $x(v_{\eta_{i}})$ for $1\leq i<\frac{z+1}{2}$ that $x(v_{\eta_{i}})\neq x(v_{\eta_{z-i+1}})$, then there is an eigenvector $Y=(y(v_{0})$, $y(v_{1})$, $\ldots$, $y(v_{n-1}))^{T}\in R^{n}$ corresponding to $q(\Theta(j,k))$ that

(1) $y(v_{0})=0$;

(2) $y(v_{\frac{k+j}{2}})=0$ and $y(v_{\frac{k+j}{2}-1})\neq0$ if $k+j$ is even;

(3) $y(v_{\eta_{\frac{1+z}{2}}})=0$ and $y(v_{\eta_{\frac{1+z}{2}-1}})\neq0$ if $z\geq 3$ is odd; $y(v_{\eta_{1}})=0$ and $y(v_{j})\neq0$ if $z=1$;

(4) $y(v_{i})=-y(v_{k+j-i})$ for $j\leq i<\frac{k+j}{2}$;

(5) $y(v_{\eta_{i}})=- y(v_{\eta_{z-i+1}})$ for $1\leq i<\frac{z+1}{2}$ if $z\geq 2$.
\end{lemma}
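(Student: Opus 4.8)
The plan is to symmetrize $X$ using the reflective automorphism of $\Theta(j,k)$. Let $\sigma$ be the automorphism of $\Theta(j,k)$ that fixes $v_0$, interchanges $v_j$ and $v_k$, reverses the arc $P_1(\Theta(j,k))$ so that $\sigma(v_i)=v_{k+j-i}$ for $j\le i\le k$, and reverses the arc $P_2(\Theta(j,k))$ so that $\sigma(v_{\eta_i})=v_{\eta_{z-i+1}}$ for $1\le i\le z$ (with $\sigma$ mapping the edge $v_jv_k$ to itself when $z=0$); a direct check shows $\sigma$ is a well-defined graph automorphism. Let $X'$ be the vector with $x'(v)=x(\sigma(v))$. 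Permuting the coordinates of an eigenvector by an automorphism again yields an eigenvector for the same eigenvalue, so $X'$, and hence $Y:=X-X'$, are eigenvectors for $q:=q(\Theta(j,k))$. The hypothesis says precisely that $x(v_i)\ne x(\sigma(v_i))$ for some $v_i$ on $P_1$ with $j\le i<\frac{k+j}{2}$, or $x(v_{\eta_i})\ne x(\sigma(v_{\eta_i}))$ for some $v_{\eta_i}$ on $P_2$ with $1\le i<\frac{z+1}{2}$; in either case $X\ne X'$, so $Y\ne\mathbf{0}$ and $Y$ is a genuine eigenvector for $q$. Moreover $Y\circ\sigma=X'-X=-Y$, so $y(\sigma(v))=-y(v)$ for every vertex $v$, and this antisymmetry drives the rest.

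Most conclusions now follow by reading off fixed points of $\sigma$. Since $\sigma(v_0)=v_0$ we get $y(v_0)=0$, which is (1). For $j\le i<\frac{k+j}{2}$ we have $\sigma(v_i)=v_{k+j-i}$, so $y(v_i)=-y(v_{k+j-i})$, which is (4), and similarly $\sigma(v_{\eta_i})=v_{\eta_{z-i+1}}$ gives (5). If $k+j$ is even then $v_{\frac{k+j}{2}}$ is fixed by $\sigma$, so $y(v_{\frac{k+j}{2}})=0$; if $z\ge3$ is odd then $v_{\eta_{\frac{1+z}{2}}}$ is fixed, so $y(v_{\eta_{\frac{1+z}{2}}})=0$; and if $z=1$ then $v_{\eta_1}$ is fixed, so $y(v_{\eta_1})=0$. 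These are the vanishing halves of (2) and (3).

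The main obstacle will be the non-vanishing halves of (2) and (3); I would prove these by contradiction, using that every interior vertex of $P_1$ and of $P_2$ has degree $2$ in $\Theta(j,k)$ while $v_j$ and $v_k$ have degree $3$. At a degree-$2$ vertex $v_m$ with neighbours $v_a,v_b$ the eigen-equation reads $y(v_a)+y(v_b)=(q-2)y(v_m)$, so if $y$ vanishes at $v_m$ and at one neighbour it vanishes at the other; at $v_j$, with neighbours $v_{j+1}$, $v_{\eta_1}$, $v_0$, it reads $y(v_{j+1})+y(v_{\eta_1})+y(v_0)=(q-3)y(v_j)$. Assume, for (2) with $k+j$ even, that $y(v_{\frac{k+j}{2}-1})=0$; together with $y(v_{\frac{k+j}{2}})=0$ this gives two consecutive zeros of $Y$ on $P_1$, and propagating the degree-$2$ implication along $P_1$ (equivalently, applying the antisymmetry) forces $y\equiv0$ on $V(P_1)$, in particular $y(v_j)=y(v_k)=0$. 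Then the degree-$3$ equation at $v_j$ together with $y(v_{j+1})=y(v_0)=0$ gives $y(v_{\eta_1})=0$, so $y(v_j)=y(v_{\eta_1})=0$ are two consecutive zeros on $P_2$, and propagating along $P_2$ yields $y\equiv0$ on $V(P_2)$ as well (vacuously if $z=0$); hence $Y=\mathbf{0}$, contradicting $Y\ne\mathbf{0}$, so $y(v_{\frac{k+j}{2}-1})\ne0$. The argument for (3) is the mirror image: assume the vanishing of $y(v_{\eta_{\frac{1+z}{2}-1}})$ --- reading $v_{\eta_0}$ as $v_j$, which is the statement ``$y(v_j)\ne0$'' in the case $z=1$ --- combine it with the already-established zero at $v_{\eta_{\frac{1+z}{2}}}$, propagate along $P_2$ to get $y\equiv0$ on $V(P_2)$, cross to $P_1$ through the degree-$3$ equation at $v_j$, and propagate along $P_1$ to get $y\equiv0$ on $V(P_1)$, again forcing $Y=\mathbf{0}$. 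The only delicate point is bookkeeping in the short-arc degenerate cases ($k-j=2$, $k=j+1$, $z\in\{0,1\}$), where some propagation steps collapse to a single vertex and are replaced by a one-line direct check of the eigen-equation.
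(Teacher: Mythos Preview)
Your proposal is correct and is essentially the same argument as the paper's: the paper defines $F$ componentwise by $f(v)=-x(\sigma(v))$ for the very automorphism $\sigma$ you introduce, sets $Y=X+F=X-X'$, and then verifies the antisymmetry and the non-vanishing of $y(v_{\frac{k+j}{2}-1})$ by the same degree-$2$/degree-$3$ propagation of zeros along $P_1$ and then $P_2$. The only presentational difference is that you name $\sigma$ and invoke the general fact that automorphisms permute eigenvectors, whereas the paper writes out the coordinates of $F$ and appeals to ``the symmetry of $\Theta(j,k)$''; the paper also treats only the case $k+j$ even (equivalently $\|V(P_1)\|$ odd) and disposes of the other parity with ``similarly,'' while you spell out both (2) and (3).
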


\begin{proof}
Note that $\mathscr{C}=P_{1}(\Theta(j,k))\cup P_{2}(\Theta(j,k))$ in $\Theta(j,k)$, and note that $\mathscr{C}$ is an odd cycle. Then one of $\|V(P_{1}(\Theta(j,k)))\|$ and $\|V(P_{2}(\Theta(j,k)))\|$ is odd. Without loss of generality, assume that $\|V(P_{1}(\Theta(j,k)))\|$ is odd. Then $\|V(P_{2}(\Theta(j,k)))\|$, $k-j$, $k+j$ and $z$ (if $z\geq 2$) are all even.

For proving this lemma, we employ a vector $F=(f(v_{0})$, $f(v_{1})$, $\ldots$, $f(v_{n-1}))^{T}$ satisfying that $$\left \{\begin{array}{ll}
 f(v_{i})=-x(v_{k+j-i}), \ & j\leq i\leq \frac{k+j}{2}-1;\\
 \\
 f(v_{i})=-x(v_{j+k-i}), \ & \frac{k+j}{2}+1\leq i\leq k;\\
 \\
 f(v_{\frac{k+j}{2}})=-x(v_{\frac{k+j}{2}});\ & \\
 \\
 f(v_{0})=-x(v_{0});\ & \\
\\ f(v_{\eta_{i}})=- x(v_{\eta_{z-i+1}}),\ & \ 1\leq i\leq\frac{z}{2}\ if\ z\geq2;\\
\\ f(v_{\eta_{i}})=- x(v_{\eta_{1+z-i}}),\ & \ \frac{z}{2}+1\leq i\leq z\ if\ z\geq2.\;\\
 \end{array}\right.$$
Let $Y=(y(v_{0})$, $y(v_{1})$, $\ldots$, $y(v_{n-1}))^{T}=X+F$. Then $y(v_{0})=0$, $y(v_{\frac{k+j}{2}})=0$, $y(v_{i})=x(v_{i})-x(v_{k+j-i})=-(x(v_{k+j-i})-x(v_{i}))=-y(v_{k+j-i})$ for $j\leq i\leq \frac{k+j}{2}-1$, and $y(v_{\eta_{i}})=x(v_{\eta_{i}})- x(v_{\eta_{z-i+1}})=-(x(v_{\eta_{z-i+1}})-x(v_{\eta_{i}}))=- y(v_{\eta_{z-i+1}})$ for $1\leq i\leq\frac{z}{2}$ if $z\geq2$ now. Note the condition that there is a $x(v_{i})$ for $j\leq i<\frac{k+j}{2}$ that $x(v_{i})\neq x(v_{k+j-i})$, or there is a $x(v_{\eta_{i}})$ for $1\leq i<\frac{z+1}{2}$ that $x(v_{\eta_{i}})\neq x(v_{\eta_{z-i+1}})$. Thus $Y\neq \mathbf{0}^{T}$. Note the symmetry of $\Theta(j,k)$. Thus $F$ is also an eigenvector of $\Theta(j,k)$ corresponding to $q(\Theta(j,k))$. Then $Y$ is also an eigenvector of $\Theta(j,k)$ corresponding to $q(\Theta(j,k))$.

Now, we prove $y(v_{\frac{k+j}{2}-1})\neq 0$.
Otherwise, suppose that $y(v_{\frac{k+j}{2}-1})= 0$. Thus if $\frac{k+j}{2}\geq j+2$, from $q(\Theta(j,k))y(v_{\frac{k+j}{2}-1})=2y(v_{\frac{k+j}{2}-1})+y(v_{\frac{k+j}{2}-2})+y(v_{\frac{k+j}{2}})$, then it follows that $y(v_{\frac{k+j}{2}-2})= 0$. If $\frac{k+j}{2}\geq j+3$, by induction, from $q(\Theta(j,k))y(v_{i})=2y(v_{i})+y(v_{i-1})+y(v_{i+1})$ for $j+1\leq i\leq \frac{k+j}{2}-2$, then it follows that $y(v_{s})=0$ for $j\leq s\leq \frac{k+j}{2}-3$. From $q(\Theta(j,k))y(v_{0})=2y(v_{0})+y(v_{j})+y(v_{k})$ and $y(v_{j})=0$, then we have $y(v_{0})=0$. If $k-j\leq n-4$, from $q(\Theta(j,k))y(v_{j})=3y(v_{j})+y(v_{0})+y(v_{j+1})+y(v_{\eta_{1}})$, then $y(v_{\eta_{1}})=0$ follows. Proceeding like this, we get that $y(v_{\eta_{i}})=0$ for $1\leq i\leq z$. Then $y=\mathbf{0}^{T}$, which contradicts that $Y\neq \mathbf{0}^{T}$. Thus $y(v_{\frac{k+j}{2}-1})\neq 0$.

Consequently, $Y$ makes the lemma hold. Similarly, if $\|V(P_{2}(\Theta(j,k)))\|$ is odd, we can get an analogous eigenvector $Y$ making the lemma hold.
 \ \ \ \ \ $\Box$
\end{proof}

\begin{lemma}\label{le03.07} 
Suppose both $k$ and $j$ are positive integers, and $k+j\geq 2$ is even. If any eigenvector of $\Theta(j,k)$ corresponding to $q(\Theta(j,k))$ does not satisfy the conclusion in Lemma \ref{le03.06} (that is, there is no eigenvector of $\Theta(j,k)$ corresponding to $q(\Theta(j,k))$ satisfying the conclusion in Lemma \ref{le03.06}), then any eigenvector $W=(w(v_{0})$, $w(v_{1})$, $\ldots$, $w(v_{n-1}))^{T}\in R^{n}$ of $\Theta(j,k)$ corresponding to $q(\Theta(j,k))$ satisfies that

(1) $w(v_{i})=w(v_{k+j-i})$ for $j\leq i<\frac{k+j}{2}$;

(2) $w(v_{\eta_{i}})=w(v_{\eta_{z-i+1}})$ for $1\leq i<\frac{z+1}{2}$ if $z\geq 2$;

(3) $w(v_{i})\neq 0$ for $0\leq i\leq n-1$;

(4) $w(v_{i})w(v_{l})< 0$ for any edge $v_{i}v_{l}\neq v_{\eta_{\frac{z}{2}}}v_{\eta_{\frac{z}{2}+1}}$;

(5) $\mid w(v_{i})|> \mid w(v_{i-1})|$ for $j+1\leq i\leq \frac{k+j}{2}$;

(6) $\mid w(v_{0})|>\mid w(v_{j})|$;

(7) $\mid w(v_{j})|>\mid w(v_{\eta_{1}})|$ if $z\geq2$;

(8) $\mid w(v_{\eta_{i-1}})|>\mid w(v_{\eta_{i}})|$ for $2\leq i\leq \frac{z}{2}$ if $z\geq4$.

\end{lemma}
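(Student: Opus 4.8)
The plan is to derive all eight properties from two facts: the hypothesis rules out the existence of the ``antisymmetric'' eigenvector produced in Lemma \ref{le03.06}, and $q(\Theta(j,k))$ is the smallest eigenvalue of a positive-semidefinite matrix, strictly positive since $\Theta(j,k)$ is nonbipartite. First I would establish the symmetry statements (1) and (2): given any eigenvector $W$, apply the reflection automorphism of $\Theta(j,k)$ that swaps $v_i\leftrightarrow v_{k+j-i}$ along $P_1(\Theta(j,k))$ and $v_{\eta_i}\leftrightarrow v_{\eta_{z-i+1}}$ along $P_2(\Theta(j,k))$ and fixes $v_0$; call the reflected eigenvector $W'$. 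If $W$ were not symmetric, then $W-W'$ would be a nonzero eigenvector satisfying the conclusion of Lemma \ref{le03.06} (it is antisymmetric under the reflection, so $y(v_0)=0$, $y(v_{(k+j)/2})=0$, etc.), contradicting the hypothesis; hence $W=W'$, which is exactly (1) and (2). This also gives $w(v_0)=w(v_0)$ trivially and lets me treat the eigenvector equation only on a ``half'' of the graph.

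Next I would prove (3), (4). For (3), suppose some $w(v_i)=0$; by the symmetry just proved I may assume $v_i$ lies on the ``first half'' of one of the paths (or is $v_0$, or is one of the two central vertices $v_{(k+j)/2}$, $v_{\eta_{z/2}}$, $v_{\eta_{z/2+1}}$), and then run the same propagation argument used at the end of the proof of Lemma \ref{le03.06}: from the eigenvalue equation $q\,w(v_s)=\deg(v_s)w(v_s)+\sum_{t\sim s}w(v_t)$ at a degree-$2$ vertex, a single vanishing value forces its neighbours to vanish, and one walks this along the paths, uses the symmetry at the central vertices, uses the degree-$3$ equations at $v_j$ and $v_k$, and the equation at $v_0$, to conclude $W=\mathbf{0}^T$, a contradiction. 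The key extra input here is that a \emph{symmetric} eigenvector with a zero entry collapses entirely, because symmetry removes the freedom that an antisymmetric vector would have. For (4), I would use the standard sign argument for the least eigenvalue: since $q=q(\Theta(j,k))$ realizes the minimum of $X^TQX/X^TX$ and $X^TQX=\sum_{v_iv_l\in E}(x(v_i)+x(v_l))^2$, replacing $W$ by the vector with entries $(-1)^{\mathrm{dist}}\cdot|w(\cdot)|$ along a suitable bipartition-like structure can only decrease the Rayleigh quotient; one checks that equality in the resulting chain of inequalities forces, edge by edge, $w(v_i)w(v_l)\le 0$, and then (3) upgrades $\le$ to $<$. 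The single excepted edge $v_{\eta_{z/2}}v_{\eta_{z/2+1}}$ is exactly the ``odd'' edge of the odd cycle $\mathscr{C}$ across which the sign pattern cannot alternate, so there $w(v_{\eta_{z/2}})w(v_{\eta_{z/2+1}})>0$; I would verify this is forced by combining (1)--(3) with the eigenvalue equations at $v_{\eta_{z/2}}$ and $v_{\eta_{z/2+1}}$.

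Finally I would prove the monotonicity statements (5)--(8), which is where the main work lies. Write $a_i=|w(v_i)|$. Using (4), along the path $v_j v_{j+1}\cdots v_{(k+j)/2}$ the eigenvalue equation at an interior vertex $v_i$ ($j+1\le i\le (k+j)/2-1$) becomes, after pulling out signs, $(2-q)a_i=a_{i-1}+a_{i+1}$; since $0<q<2$ (Lemma \ref{le02,02} gives $q<\delta\le 2$... more precisely $q<\delta$, and here I would argue $\delta=2$ only if $\Theta(j,k)$ has a degree-$2$ vertex, which it does, so $q<2$), the sequence $a_i$ is strictly convex, and combined with $a_{(k+j)/2-1}\neq 0=$ (the central vertex has value... here I must be careful: in this symmetric case $v_{(k+j)/2}$ need \emph{not} be zero) — instead I would use convexity together with the boundary behaviour at $v_{(k+j)/2}$ (where the two symmetric halves meet, giving $(2-q)a_{(k+j)/2}=2a_{(k+j)/2-1}$, so $a_{(k+j)/2}<a_{(k+j)/2-1}$ would follow only if $2-q>2$, false — so actually $a_{(k+j)/2}>a_{(k+j)/2-1}$), hence the convex sequence is increasing toward the centre, which is (5). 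For (6) and (7), I would apply the degree-$3$ equation at $v_j$, namely $(3-q)a_j=a_0+a_{j+1}+a_{\eta_1}$ (signs arranged by (4)), together with the equations at $v_0$ and at $v_{\eta_1}$, to compare $a_0,a_j,a_{\eta_1}$; the strict inequalities come from (3) and from $q<2<3$. For (8), the same convexity argument as in (5) applies to the sub-path $v_{\eta_1}v_{\eta_2}\cdots v_{\eta_{z/2}}$, now a path of degree-$2$ vertices, giving a strictly convex $a$-sequence that must be decreasing away from $v_j$ because its value at $v_j$ dominates (by (7)) — again the excepted edge $v_{\eta_{z/2}}v_{\eta_{z/2+1}}$ is what breaks the pattern at the far end. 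The main obstacle I anticipate is getting the boundary/turnaround conditions at the three special vertices ($v_0$, $v_{(k+j)/2}$, and the pair $v_{\eta_{z/2}},v_{\eta_{z/2+1}}$) exactly right so that the convexity of the $a$-sequences pins down the direction of monotonicity; this requires careful bookkeeping of which path-half is odd and a clean statement that $q<2$ (equivalently $2-q>0$), after which each monotonicity claim is a short consequence of discrete convexity plus a sign at one endpoint.
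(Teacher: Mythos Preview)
Your outline follows the paper's strategy closely: symmetry (1)--(2) from the contrapositive of Lemma~\ref{le03.06}, a sign-adjusted comparison vector $F=(-1)^{\mathrm{dist}}|w(\cdot)|$ for the sign pattern (4), and the eigenvalue recursions together with $0<q<2$ for the monotonicity (5)--(8).

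There is one genuine gap. In your argument for (3) you assert that ``at a degree-$2$ vertex, a single vanishing value forces its neighbours to vanish.'' This is false: $w(v_s)=0$ at a degree-$2$ vertex gives only $w(v_{s-1})+w(v_{s+1})=0$, and symmetry alone does not kill both unless $v_s$ is the central vertex of its path. The propagation you cite from Lemma~\ref{le03.06} starts from \emph{two} consecutive zeros, not one, and your remark that ``symmetry removes the freedom'' is not an argument when the initial zero sits off-centre. This creates a circularity in your plan: your (3) needs strict sign alternation to propagate, while your (4) needs (3) to upgrade $\le 0$ to $<0$.

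The paper avoids this by reversing the order. It first shows that the sign-adjusted vector $F$ is itself an eigenvector (your (4) step; note that the excepted-edge term is unchanged under $W\mapsto F$ precisely because of the symmetry (2) you have already established, a point you should make explicit). It then proves nonvanishing and all the monotonicity statements for $F$ rather than for $W$: in $F$ the signs alternate by construction, so at a degree-$2$ vertex the two neighbours have the same sign and a vanishing sum genuinely forces both to vanish, making the propagation work (paper's Assertions~1--11). Finally it proves $W=F$ by a strict Rayleigh-quotient comparison (Assertion~12), which transfers (3)--(8) to $W$ at once. Reorganizing your proof this way closes the gap with no new ideas.

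Two minor points. The recurrence $(2-q)a_i=a_{i-1}+a_{i+1}$ with $0<2-q<2$ makes the sequence strictly \emph{concave}, not convex; you effectively self-correct, and the monotonicity conclusions you draw for (5)--(8) from the boundary identities $(2-q)a_{(k+j)/2}=2a_{(k+j)/2-1}$, $(2-q)a_0=2a_j$, and $(3-q)a_j=a_0+a_{j+1}+a_{\eta_1}$ are correct and match the paper's Assertions~3, 4, 9, 10. Also, to define $F$ via $\mathrm{sgn}(w(v_{\eta_{z/2}}))$ you need $w(v_{\eta_{z/2}})\neq 0$; but if it were zero then symmetry gives $w(v_{\eta_{z/2+1}})=0$ as well, hence two consecutive zeros, and the easy propagation dispatches that case directly.
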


\begin{proof}
Suppose $W=(w(v_{0})$, $w(v_{1})$, $\ldots$, $w(v_{n-1}))^{T}\in R^{n}$ is an eigenvector corresponding to $q(\Theta(j,k))$. (1), (2) follow from Lemma \ref{le03.06} as corollaries directly. Note that $deg(v_{0})=2$, $\Theta(j,k)$ is connected and nonbipartite. Combining Lemma \ref{le02,02}, we have $0<q(\Theta(j,k))< 2$. Next, we prove (3)- (6). Note that both $n$ and $k+j$ are even. Then $k-j+1$ is odd, and $\|V(P_{2}(\Theta(j,k)))\|$ is even.

Let $H=\Theta(j,k)-v_{\eta_{\frac{z}{2}}}v_{\eta_{\frac{z}{2}+1}}$ if $z\geq 2$, and $H=\Theta(j,k)-v_{j}v_{k}$ if $P_{2}(\Theta(j,k))=v_{j}v_{k}$. Let $F=(f(v_{0})$, $f(v_{1})$, $\ldots$, $f(v_{n-1}))^{T}\in R^{n}$ be a new vector satisfying that $f(v_{i})=(-1)^{dist_{H}(v_{\eta_{\frac{z}{2}}}, v_{i})}\mathrm{sgn}(w(v_{\eta_{\frac{z}{2}}}))| w(v_{i})|$ for any $v_{i}\in V(\Theta(j,k))$ if $z\geq 2$; $f(v_{i})=(-1)^{dist_{H}(v_{j}, v_{i})}\mathrm{sgn}(w(v_{j}))| w(v_{i})|$ for any $v_{i}\in V(\Theta(j,k))$ if $P_{2}(\Theta(j,k))=v_{j}v_{k}$. Then $F$ satisfies

(i) $f(v_{i})=f(v_{k+j-i})$ for $j\leq i\leq\frac{k+j}{2}$;

(ii) $f(v_{\eta_{i}})=f(v_{\eta_{z-i+1}})$ for $1\leq i\leq\frac{z}{2}$ if $z\geq 2$;

(iii) $f(v_{i})f(v_{l})\leq0$ for any edge $v_{i}v_{l}\neq v_{\eta_{\frac{z}{2}}}v_{\eta_{\frac{z}{2}+1}}$.

$F$ also satisfies that $\frac{F^{T}Q(\Theta(j,k))F}{F^{T}F}\leq \frac{W^{T}Q(\Theta(j,k))W}{W^{T}W}=q(\Theta(j,k))$. Note that $W$ is an eigenvector corresponding to $q(\Theta(j,k))$. Then both $W$ and $F$ are not zero vector. Thus $F$ is also an eigenvector of $\Theta(j,k)$ corresponding to $q(\Theta(j,k))$.

{\bf Assertion 1} $f(v_{\frac{k+j}{2}})\neq 0$. Otherwise, suppose that $f(v_{\frac{k+j}{2}})= 0$. Then from $q(\Theta(j,k))f(v_{\frac{k+j}{2}})=2f(v_{\frac{k+j}{2}})+f(v_{\frac{k+j}{2}-1})+f(v_{\frac{k+j}{2}+1})$, it follows that $f(v_{\frac{k+j}{2}-1})= f(v_{\frac{k+j}{2}+1})=0$. By induction, from $q(\Theta(j,k))f(v_{i})=2f(v_{i})+f(v_{i-1})+f(v_{i+1})$ for $j+1\leq i\leq \frac{k+j}{2}-1$, we get that $f(v_{s})=0$ for $j\leq s\leq \frac{k+j}{2}-2$. From $q(\Theta(j,k))f(v_{0})=2f(v_{0})+f(v_{j})+f(v_{k})$, we get $f(v_{0})=0$. If $k-j\leq n-4$, from $q(\Theta(j,k))f(v_{j})=3f(v_{j})+f(v_{0})+f(v_{j+1})+f(v_{\eta_{1}})$, then it follows that $f(v_{\eta_{1}})=0$. Proceeding like this, we get that $f(v_{\eta_{i}})=0$ for $1\leq i\leq z$. Then $F=\mathbf{0}^{T}$, which contradicts that $F\neq \mathbf{0}^{T}$. Thus $f(v_{\frac{k+j}{2}})\neq 0$. Then Assertion 1 follows.

Note that if $f(v_{\frac{k+j}{2}-1})=0$, from $q(\Theta(j,k))f(v_{\frac{k+j}{2}})=2f(v_{\frac{k+j}{2}})+f(v_{\frac{k+j}{2}-1})+f(v_{\frac{k+j}{2}+1})$, we get $f(v_{\frac{k+j}{2}})=0$ which contradicts Assertion 1. Then we get the following Assertion 2.

{\bf Assertion 2} $f(v_{\frac{k+j}{2}-1})\neq 0$.

{\bf Assertion 3} $| f(v_{\frac{k+j}{2}})|> | f(v_{\frac{k+j}{2}-1})|$. We prove it by contradiction. Suppose that $\mid f(v_{\frac{k+j}{2}})|\leq \mid f(v_{\frac{k+j}{2}-1})|$. Note that from the above definition of $F$, Assertion 1 and Assertion 2, we have $f(v_{\frac{k+j}{2}-1})= f(v_{\frac{k+j}{2}+1})\neq 0$, $f(v_{\frac{k+j}{2}})\neq 0$ and $f(v_{\frac{k+j}{2}})f(v_{\frac{k+j}{2}-1})< 0$. From $q(\Theta(j,k))f(v_{\frac{k+j}{2}})=2f(v_{\frac{k+j}{2}})+f(v_{\frac{k+j}{2}-1})+f(v_{\frac{k+j}{2}+1})$, we get that $q(\Theta(j,k))\leq0$, which contradicts $q(\Theta(j,k))>0$. Therefore, $\mid f(v_{\frac{k+j}{2}})|> | f(v_{\frac{k+j}{2}-1})|$ follows.

{\bf Assertion 4} $| f(v_{i})|> | f(v_{i-1})|$ and $| f(v_{i})\mid > 0$ for $j+1\leq i\leq \frac{k+j}{2}$.
Note that $f(v_{\frac{k+j}{2}-1})\neq 0$, $| f(v_{\frac{k+j}{2}})|> | f(v_{\frac{k+j}{2}-1})|$, and $f(v_{i})f(v_{i-1})\leq 0$ for $j+1\leq i\leq \frac{k+j}{2}$. If $| f(v_{\frac{k+j}{2}-1})|\leq | f(v_{\frac{k+j}{2}-2})|$, from $q(\Theta(j,k))f(v_{\frac{k+j}{2}-1})=2f(v_{\frac{k+j}{2}-1})+f(v_{\frac{k+j}{2}})+f(v_{\frac{k+j}{2}-2})$, then it follows that $q(\Theta(j,k))<0$,
which contradicts $q(\Theta(j,k))>0$. Thus we get $| f(v_{\frac{k+j}{2}-1})|> | f(v_{\frac{k+j}{2}-2})|$. Similarly, from $q(\Theta(j,k))f(v_{i})=2f(v_{i})+f(v_{i+1})+f(v_{i-1})$ for $j+1\leq i\leq \frac{k+j}{2}-2$, by induction, we get that $| f(v_{i})|> | f(v_{i-1})|$ and $| f(v_{i})\mid > 0$ for $j+1\leq i\leq \frac{k+j}{2}-2$. Combined Assertion 3, Assertion 4 follows.

{\bf Assertion 5} If $z\geq 2$, then $f(v_{\eta_{\frac{z}{2}}})\neq 0$. We prove it by contradiction. Note that $z$ is even. Suppose $f(v_{\eta_{\frac{z}{2}}})=0$. Note that $f(v_{\eta_{\frac{z}{2}}})=f(v_{\eta_{\frac{z}{2}+1}})$, $q(\Theta(j,k))f(v_{\eta_{\frac{z}{2}}})=2f(v_{\eta_{\frac{z}{2}}})+f(v_{\eta_{\frac{z}{2}-1}})+f(v_{\eta_{\frac{z}{2}+1}})$. Thus $f(v_{\eta_{\frac{z}{2}-1}})=0$. By induction, from $q(\Theta(j,k))f(v_{\eta_{i}})=2f(v_{\eta_{i}})+f(v_{\eta_{i-1}})+f(v_{\eta_{i+1}})$ for $2\leq i\leq \frac{z}{2}-1$ and $q(\Theta(j,k))f(v_{\eta_{1}})=2f(v_{\eta_{1}})+f(v_{j})+f(v_{\eta_{2}})$, it follows that $f(v_{\eta_{i}})=0$ for $1\leq i\leq \frac{z}{2}-2$ and $f(v_{j})=0$.
From $q(\Theta(j,k))f(v_{0})=2f(v_{0})+f(v_{j})+f(v_{k})$, it follows that $f(v_{0})=0$. From $q(\Theta(j,k))f(v_{j})=3f(v_{j})+f(v_{j+1})+f(v_{0})+f(v_{\eta_{1}})$, we get $f(v_{j+1})=0$, which contradicts that $|f(v_{j+1})| >0$ in Assertion 4. As a result, $f(v_{\eta_{\frac{z}{2}}})\neq 0$ follows.

Similarly, we get the following Assertion 6.

{\bf Assertion 6} If $k-j= n-2$, then $f(v_{j})\neq 0$.

{\bf Assertion 7} If $z\geq 2$, then $f(v_{\eta_{i}})\neq 0$ for $1\leq i\leq \frac{z}{2}-1$. We prove this assertion by contradiction. Suppose that $f(v_{\eta_{s}})= 0$ for some $2\leq s\leq \frac{z}{2}-1$. From $q(\Theta(j,k))f(v_{\eta_{s}})=2f(v_{\eta_{s}})+f(v_{\eta_{s-1}})+f(v_{\eta_{s+1}})$, noting that $f(v_{\eta_{s-1}})f(v_{\eta_{s+1}})\geq 0$, we get $f(v_{\eta_{s-1}})=f(v_{\eta_{s+1}})= 0$. Similar to Assertion 5, we get that $f(v_{\eta_{i}})=0$ for $1\leq i\leq \frac{z}{2}$, $f(v_{j})=0$ and $f(v_{j+1})=0$, which contradicts that $|f(v_{j+1})|>0$ in Assertion 4. Thus the assertion holds.

{\bf Assertion 8} $f(v_{j})\neq0$. Note that $f(v_{j+1})\neq0$, $f(v_{\eta_{1}})\neq 0$, $f(v_{j+1})f(v_{j})\leq 0$, $f(v_{\eta_{1}})f(v_{j})\leq 0$ if $z\geq 2$, $f(v_{0})f(v_{j})\leq 0$, $f(v_{j+1})f(v_{0})\geq 0$, $f(v_{j+1})f(v_{\eta_{1}})\geq 0$ and $f(v_{0})f(v_{\eta_{1}})\geq 0$ if $z\geq 2$. From $q(\Theta(j,k))f(v_{j})=3f(v_{j})+f(v_{j+1})+f(v_{0})+f(v_{\eta_{1}})$ if $z\geq 2$, we get $f(v_{j})\neq 0$; from $q(\Theta(j,k))f(v_{j})=3f(v_{j})+f(v_{j+1})+f(v_{0})+f(v_{k})$ if $k-j= n-2$ and $f(v_{j})=f(v_{k})$, we get $f(v_{j})\neq 0$. Then this assertion follows.

{\bf Assertion 9} $| f(v_{0})|>| f(v_{j})|$. This assertion follows from the facts that $f(v_{j})=f(v_{k})$, $f(v_{0})f(v_{j})\leq 0$, $q(\Theta(j,k))>0$ and $q(\Theta(j,k))f(v_{0})=2f(v_{0})+f(v_{j})+f(v_{k})$.

Note that $| f(v_{0})|>| f(v_{j})|$, $| f(v_{j+1})|>| f(v_{j})|$. From that $q(\Theta(j,k))f(v_{j})=3f(v_{j})+f(v_{j+1})+f(v_{0})+f(v_{\eta_{1}})$ if $z\geq 2$, $q(\Theta(j,k))f(v_{\eta_{1}})=2f(v_{\eta_{1}})+f(v_{j})+f(v_{\eta_{2}})$ if $z\geq 4$, $q(\Theta(j,k))f(v_{\eta_{i}})=2f(v_{\eta_{i}})+f(v_{\eta_{i-1}})+f(v_{\eta_{i+1}})$ for $2\leq i\leq \frac{z}{2}-1$ if $z\geq 6$, as Assertion 4, we get the following Assertion 10.

{\bf Assertion 10} $| f(v_{j})|>| f(v_{\eta_{1}})|$ if $z\geq 2$, $| f(v_{\eta_{i-1}})|>| f(v_{\eta_{i}})|$ for $2\leq i\leq \frac{z}{2}$ if $z\geq 4$.

From the above Assertions 1-11, we get the following Assertion 11.

{\bf Assertion 11} $f(v_{i})\neq 0$ for $0\leq i\leq n-1$.

{\bf Assertion 12} $W=F$. Note that $|f(v_{i})|=|w(v_{i})|$ for $0\leq i\leq n-1$, $f(v_{j})=f(v_{k})=w(v_{j})=w(v_{k})$ if $P_{2}(\Theta(j,k))=v_{j}v_{k}$, and $f(v_{\eta_{\frac{z}{2}}})=f(v_{\eta_{\frac{z}{2}+1}})=w(v_{\eta_{\frac{z}{2}}})=w(v_{\eta_{\frac{z}{2}+1}})$ if $z\geq 2$. Assume that $W\neq F$. Then there is $\mathrm{sgn}(w(v_{i}))\neq (-1)^{dis_{H}(v_{\eta_{\frac{z}{2}}}, v_{i})}\mathrm{sgn}(w(v_{\eta_{\frac{z}{2}}}))$ for some $v_{i}\not\in \{v_{\eta_{\frac{z}{2}}}, v_{\eta_{\frac{z}{2}+1}}\}$ if $z\geq 2$; there is $\mathrm{sgn}(w(v_{i}))\neq (-1)^{dis_{H}(v_{j}, v_{i})}\mathrm{sgn}(w(v_{j}))$ for some $v_{i}\not\in \{v_{j}, v_{k}\}$ if $P_{2}(\Theta(j,k))=v_{j}v_{k}$.
Suppose $v_{s}$ is the nearest vertex from $v_{\eta_{\frac{z}{2}}}$ that $\mathrm{sgn}(w(v_{i}))\neq (-1)^{dis_{H}(v_{\eta_{\frac{z}{2}}}, v_{s})}\mathrm{sgn}(w(v_{\eta_{\frac{z}{2}}}))$ if $z\geq 2$; $v_{s}$ is the nearest vertex from $v_{j}$ that $\mathrm{sgn}(w(v_{i}))\neq (-1)^{dis_{H}(v_{j}, v_{s})}\mathrm{sgn}(w(v_{j}))$ if $P_{2}(\Theta(j,k))=v_{j}v_{k}$. And suppose $\mathcal {P}_{s}$ is a shortest path from $v_{\eta_{\frac{z}{2}}}$ to $v_{s}$ if $z\geq 2$; $\mathcal {P}_{s}$ is a shortest path from $v_{j}$ to $v_{s}$ if $P_{2}(\Theta(j,k))=v_{j}v_{k}$. Furthermore, we suppose $v_{\alpha}$ is the vertex adjacent to $v_{s}$ along $\mathcal {P}_{s}$. Then $\mathrm{sgn}(w(v_{s}))=\mathrm{sgn}(w(v_{\alpha}))$ and $(f(v_{s})+f(v_{\alpha}))^{2}=(w(v_{s})-w(v_{\alpha}))^{2}<(w(v_{s})+w(v_{\alpha}))^{2}$. Thus $F^{T}Q(\Theta(j,k))F< W^{T}Q(\Theta(j,k))W$. Note that $F^{T}F=W^{T}W$. Then we get a contradiction that $q(\Theta(j,k))=\frac{F^{T}Q(\Theta(j,k))F}{F^{T}F}<\frac{W^{T}Q(\Theta(j,k))W}{W^{T}W}=q(\Theta(j,k))$. Thus the assertion holds.

Consequently, from Assertions 1-12, we get that $W$ satisfies (3)-(8). Note the arbitrariness of $W$. Then the lemma follows.
 \ \ \ \ \ $\Box$
\end{proof}

\begin{lemma}\label{le03.08} 
If there is an eigenvector $Y=(y(v_{0})$, $y(v_{1})$, $\ldots$, $y(v_{n-1}))^{T}\in R^{n}$ corresponding to $q(\Theta(j,k))$ satisfying the conclusion in Lemma \ref{le03.06},  then $q(\Theta)\leq q(\Theta(j,k))$ with equality if and only if $\Theta(j,k)\cong \Theta$.
\end{lemma}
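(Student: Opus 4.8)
The plan is to prove the stronger statement that, under the stated hypothesis, $q(\Theta)<q(\Theta(j,k))$ strictly; the lemma, including its (then vacuous) equality clause, follows at once. The idea is to squeeze both $q(\Theta)$ and $q(\Theta(j,k))$ against $q(C_{n-1})$, the least $Q$-eigenvalue of the cycle of odd length $n-1$. Write $q=q(\Theta(j,k))$ and let $Y$ be the eigenvector supplied by Lemma \ref{le03.06}; the only features of $Y$ I shall use are $Y\neq\mathbf 0^{T}$ and $y(v_{0})=0$ (clause $(1)$). Recall that $\Theta=\Theta(2,n-1)$, so $\Theta(j,k)-v_{0}$ is the odd cycle $\mathscr C=v_{1}v_{2}\cdots v_{n-1}v_{1}\cong C_{n-1}$, while in $\Theta$ the vertex $v_{0}$ is joined precisely to $v_{2}$ and $v_{n-1}$, which lie at distance $2$ on $\mathscr C$ (through $v_{1}$).

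First I would show $q\ge q(C_{n-1})$. Let $Y'$ be the restriction of $Y$ to $V(\mathscr C)$; since $y(v_{0})=0$ we have $Y'\neq\mathbf 0^{T}$ and $Y'^{T}Y'=Y^{T}Y$. Every vertex of $\mathscr C$ other than $v_{j},v_{k}$ keeps its neighbourhood and its degree $2$ in passing from $\Theta(j,k)$ to $\mathscr C$, whereas $v_{j}$ and $v_{k}$ have degree $3$ in $\Theta(j,k)$ — their extra neighbour being $v_{0}$, with $y(v_{0})=0$ — and degree $2$ in $\mathscr C$. Comparing, coordinate by coordinate, the eigenvalue equations $Q(\Theta(j,k))Y=qY$ with these facts gives $Q(\mathscr C)Y'=qY'-y(v_{j})\mathbf e_{v_{j}}-y(v_{k})\mathbf e_{v_{k}}$, where $\mathbf e_{v}$ is the standard unit vector at $v$. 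Hence $q(C_{n-1})\le \frac{Y'^{T}Q(\mathscr C)Y'}{Y'^{T}Y'}=q-\frac{y(v_{j})^{2}+y(v_{k})^{2}}{Y^{T}Y}\le q$, the first inequality being the variational characterisation of $q(C_{n-1})$ as the smallest Rayleigh quotient of $Q(\mathscr C)$.

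Next I would show $q(\Theta)<q(C_{n-1})$. Since $n-1$ is odd, $q(C_{n-1})=2-2\cos\frac{\pi}{n-1}$ is a double eigenvalue of $Q(\mathscr C)$, and the vector $Z$ given in the cyclic labelling by $z(v_{1+t})=(-1)^{t}\cos\frac{\pi t}{n-1}$ for $0\le t\le n-2$ is an eigenvector of $Q(\mathscr C)$ for this eigenvalue — a direct verification using $\cos 2x=2\cos^{2}x-1$ — satisfying $z(v_{2})=z(v_{n-1})=-\cos\frac{\pi}{n-1}\neq 0$ (here $n\ge 4$, so $0<\frac{\pi}{n-1}\le\frac{\pi}{3}$). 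Extend $Z$ to $\widehat Z$ on $V(\Theta)$ by $\widehat z(v_{0})=-z(v_{2})$. Then $(\widehat z(v_{0})+\widehat z(v_{2}))^{2}=(\widehat z(v_{0})+\widehat z(v_{n-1}))^{2}=0$, so $\widehat Z^{T}Q(\Theta)\widehat Z=Z^{T}Q(\mathscr C)Z=q(C_{n-1})\,Z^{T}Z$, while $\widehat Z^{T}\widehat Z=Z^{T}Z+z(v_{2})^{2}>Z^{T}Z$. As $C_{n-1}$ is nonbipartite we have $q(C_{n-1})>0$, whence $q(\Theta)\le\frac{q(C_{n-1})\,Z^{T}Z}{Z^{T}Z+z(v_{2})^{2}}<q(C_{n-1})$.

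Putting the two estimates together, $q(\Theta)<q(C_{n-1})\le q=q(\Theta(j,k))$; in particular $\Theta(j,k)\not\cong\Theta$ under the hypothesis (otherwise the two $q$-values would coincide), and the equivalence ``$q(\Theta)=q(\Theta(j,k))\iff\Theta(j,k)\cong\Theta$'' is correct since neither side can occur. The only place where Lemma \ref{le03.06} is really used is the vanishing $y(v_{0})=0$ in the first step; the rest is routine, and the sole point needing a little attention is the choice in the second step of an eigenvector of $C_{n-1}$ taking equal values at $v_{2}$ and $v_{n-1}$ — which exists precisely because that eigenvalue has multiplicity two — together with the observation that this eigenvector is nonzero there. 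I do not expect any serious obstacle.
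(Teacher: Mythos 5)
Your proof is correct, and it takes a genuinely different route from the paper's. The paper argues by explicit surgery: it splits into cases according to whether $|y(v_{(k+j)/2-1})|\le |y(v_{j})|$, rewires the edges at $v_{0}$ (and, in the harder case, cuts and reglues whole path segments guided by where the maximal coordinate of $Y$ sits), exhibiting in each case a comparison graph isomorphic to $\Theta$ whose Rayleigh quotient drops strictly. You instead interpolate the odd cycle $\mathscr C\cong C_{n-1}$ between the two graphs: the single hypothesis $y(v_{0})=0$ gives $q(C_{n-1})\le q(\Theta(j,k))$ by restricting the eigenvector (your identity $Q(\mathscr C)Y'=qY'-y(v_{j})e_{v_{j}}-y(v_{k})e_{v_{k}}$ checks out, and the same bound follows from the edge-sum form of the quadratic form), while the explicit least eigenvector $z(v_{1+t})=(-1)^{t}\cos\frac{\pi t}{n-1}$ of $Q(C_{n-1})$, which is symmetric about $v_{1}$ and nonzero at $v_{2}$ and $v_{n-1}$, extends by $\widehat z(v_{0})=-z(v_{2})$ to give $q(\Theta)<q(C_{n-1})$ unconditionally; I verified the wrap-around of the cycle recurrence and the positivity $q(C_{n-1})>0$ needed for strictness. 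Your version buys several things: it uses only clause (1) of Lemma \ref{le03.06}; it yields the strict inequality $q(\Theta)<q(\Theta(j,k))$, so the equality clause is vacuously true (you correctly observe that the hypothesis then forces $\Theta(j,k)\not\cong\Theta$, whereas the paper treats that case as a ``trivial'' instance of equality --- your argument shows it is in fact empty); and as a by-product it proves Lemma \ref{le03.09.01} directly, since no least eigenvector of $\Theta$ can vanish at $v_{0}$, a fact the paper later derives by a detour through $\Theta(1,2)$. The only cost is reliance on the closed-form spectrum of the odd cycle rather than purely combinatorial perturbations, which is freely available here.
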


\begin{proof}
Suppose $|y(v_{\mu})|=\max\{|y(v_{i})|\mid v_{i}\in V(\Theta(j,k))\}$. Note that $y(v_{0})=0$. Then $v_{\mu}$ is in the odd cycle $\mathscr{C}$. Note that one of $\|V(P_{1}(\Theta(j,k)))\|$ and $\|V(P_{2}(\Theta(j,k)))\|$ is odd. Without loss of generality, for convenience, we suppose $\|V(P_{1}(\Theta(j,k)))\|$ is odd (for the case that $\|V(P_{2}(\Theta(j,k)))\|$ is odd, it is proved similarly). Then both $k+j$ and  $\|V(P_{2}(\Theta(j,k)))\|$ are even. If $\frac{k+j}{2}=j+1$, then the lemma is trivial because $\Theta(j,k)\cong \Theta$ now. Thus we suppose $\frac{k+j}{2}\geq j+2$ next. And then, we consider two cases.

{\bf Case 1} $| y(v_{\frac{k+j}{2}-1})|\leq \mid y(v_{j})|$. Let $G^{'}=\Theta(j,k)-v_{0}v_{j}-v_{0}v_{k}+v_{0}v_{\frac{k+j}{2}-1}+v_{0}v_{\frac{k+j}{2}+1}$. Note that $y(v_{0})=0$. Then $Y^{T}Q(G^{'})Y\leq Y^{T}Q(\Theta(j,k))Y$ and $q(G^{'})\leq\frac{Y^{T}Q(G^{'})Y}{Y^{T}Y}\leq \frac{Y^{T}Q(\Theta(j,k))Y}{Y^{T}Y}=q(\Theta(j,k))$.

We assert that $q(G^{'})<q(\Theta(j,k))$. Otherwise, suppose $q(G^{'})=q(\Theta(j,k))$. Then $Y$ is also a eigenvector of $G^{'}$.
By Lemma \ref{le03.06}, we know that $y(v_{\frac{k+j}{2}-1})\neq0$. Note that $q(G^{'})y(v_{\frac{k+j}{2}-1})=3y(v_{\frac{k+j}{2}-1})+y(v_{\frac{k+j}{2}})+y(v_{\frac{k+j}{2}-2})+y(v_{0})$, $q(\Theta(j,k))y(v_{\frac{k+j}{2}-1})=2y(v_{\frac{k+j}{2}-1})+y(v_{\frac{k+j}{2}})+y(v_{\frac{k+j}{2}-2})$. Consequently, it follows that $q(G^{'})y(v_{\frac{k+j}{2}-1})\neq q(\Theta(j,k))y(v_{\frac{k+j}{2}-1})$, and then $q(G^{'})\neq q(\Theta(j,k))$, which contradicts the supposition $q(G^{'})=q(\Theta(j,k))$. As a result, we get that $q(G^{'})<q(\Theta(j,k))$. Then our assertion holds.

{\bf Case 2} $| y(v_{\frac{k+j}{2}-1})|> \mid y(v_{j})|$. Then $v_{\mu}\neq v_{\frac{k+j}{2}}$, $v_{\mu}\neq v_{j}$ and $v_{\mu}\neq v_{k}$ follow immediately. Let $G_{1}=\Theta(j,k)+v_{\frac{k+j}{2}-1}v_{\frac{k+j}{2}+1}$ (see Fig. 3.1). Then $q(G_{1})\geq q(\Theta(j,k))$ by Lemma \ref{le02,03}. Note that $q(G_{1})\leq \frac{Y^{T}Q(G_{1})Y}{Y^{T}Y}=\frac{Y^{T}Q(\Theta(j,k))Y}{Y^{T}Y}=q(\Theta(j,k))$. Then we get that $q(G_{1})= \frac{Y^{T}Q(G_{1})Y}{Y^{T}Y}=q(\Theta(j,k))$, and that $Y$ is an eigenvector of $G_{1}$ corresponding to $q(G_{1})$.

{\bf Subcase 2.1} $v_{\mu}$ is in $P_{1}(\Theta(j,k))$. Now, note that $y(v_{i})=-y(v_{k+j-i})$ for $j\leq i\leq \frac{k+j}{2}-1$, $y(v_{\frac{k+j}{2}})=0$, $y(v_{\frac{k+j}{2}-1})\neq0$. Thus without loss of generality, we can assume $j< \mu\leq \frac{k+j}{2}-1$.

\setlength{\unitlength}{0.6pt}
\begin{center}
\begin{picture}(415,452)
\put(40,395){\circle*{4}}
\put(100,363){\circle*{4}}
\qbezier(40,395)(70,379)(100,363)
\put(100,428){\circle*{4}}
\qbezier(40,395)(70,412)(100,428)
\put(409,428){\circle*{4}}
\put(409,361){\circle*{4}}
\qbezier(409,428)(409,395)(409,361)
\put(3,397){$v_{\frac{k+j}{2}}$}
\put(80,351){$v_{\frac{k+j}{2}-1}$}
\put(412,430){$v_{\eta_{2}}$}
\put(80,440){$v_{\frac{k+j}{2}+1}$}
\put(190,309){$G_{1}$ in Case 2}
\put(273,239){\circle*{4}}
\put(62,269){\circle*{4}}
\put(62,203){\circle*{4}}
\qbezier(62,269)(62,236)(62,203)
\put(335,269){\circle*{4}}
\put(335,203){\circle*{4}}
\put(395,241){\circle*{4}}
\qbezier(335,269)(365,255)(395,241)
\qbezier(335,203)(365,222)(395,241)
\put(236,242){$v_{\frac{k+j}{2}}$}
\put(40,191){$v_{\frac{k+j}{2}-1}$}
\put(400,239){$v_{0}$}
\put(42,282){$v_{\frac{k+j}{2}+1}$}
\put(180,155){$G^{'}$ in Subcase 2.1.1}
\put(180,-2){$G^{'}$ in Subcase 2.2}

\put(178,-35){Fig. 3.1. $G_{1}$ and $G^{'}$}
\put(330,190){$v_{j}$}
\put(220,363){\circle*{4}}
\put(284,428){\circle*{4}}
\put(166,363){\circle*{4}}
\put(338,428){\circle*{4}}
\put(279,361){\circle*{4}}
\qbezier(51,375)(52,375)(51,375)
\put(160,347){$v_{\mu}$}
\put(210,348){$v_{\mu-1}$}
\put(277,347){$v_{j}$}
\put(279,436){$v_{k}$}
\put(414,357){$v_{\eta_{1}}$}
\put(329,436){$v_{\eta_{z}}$}
\put(358,395){\circle*{4}}
\qbezier(284,428)(321,412)(358,395)
\put(363,393){$v_{0}$}
\qbezier(358,395)(318,378)(279,361)
\qbezier(100,428)(100,396)(100,363)
\put(268,203){\circle*{4}}
\put(113,203){\circle*{4}}
\qbezier(51,237)(52,237)(51,237)
\qbezier(51,237)(51,237)(52,237)
\qbezier(335,269)(304,254)(273,239)
\qbezier(335,203)(304,221)(273,239)
\put(332,276){$v_{k}$}
\put(107,190){$v_{\mu}$}
\put(260,190){$v_{\mu-1}$}
\put(163,203){\circle*{4}}
\qbezier(113,203)(138,203)(163,203)
\put(222,203){\circle*{4}}
\qbezier(268,203)(245,203)(222,203)
\put(152,190){$v_{\eta_{1}}$}
\put(212,190){$v_{\eta_{z}}$}
\put(107,44){\circle*{4}}
\put(107,111){\circle*{4}}
\put(399,44){\circle*{4}}
\put(287,44){\circle*{4}}
\put(341,44){\circle*{4}}
\put(47,78){\circle*{4}}
\put(47,78){\circle*{4}}
\qbezier(47,78)(48,78)(47,78)
\put(47,78){\circle*{4}}
\put(47,78){\circle*{4}}
\qbezier(47,78)(48,78)(47,78)
\put(47,78){\circle*{4}}
\qbezier(107,111)(77,95)(47,78)
\put(47,78){\circle*{4}}
\qbezier(107,44)(77,61)(47,78)
\put(47,78){\circle*{4}}
\put(47,78){\circle*{4}}
\qbezier(47,78)(48,78)(47,78)
\put(47,78){\circle*{4}}
\qbezier(107,111)(77,95)(47,78)
\put(166,111){\circle*{4}}
\qbezier(107,111)(136,111)(166,111)
\put(165,44){\circle*{4}}
\qbezier(107,44)(136,44)(165,44)
\put(220,44){\circle*{4}}
\qbezier(165,44)(192,44)(220,44)
\put(399,111){\circle*{4}}
\qbezier(399,44)(399,78)(399,111)
\put(404,112){$v_{\eta_{2}}$}
\put(160,119){$v_{\eta_{z}}$}
\put(101,31){$v_{j}$}
\put(103,118){$v_{k}$}
\put(154,33){$v_{\eta_{1}}$}
\put(401,32){$v_{j+1}$}
\put(212,31){$v_{k-1}$}
\qbezier(287,44)(314,44)(341,44)
\put(260,31){$v_{\frac{k+j}{2}+1}$}
\put(324,31){$v_{\frac{k+j}{2}-1}$}
\qbezier[46](166,111)(282,111)(399,111)
\qbezier(284,428)(311,428)(338,428)
\qbezier[36](100,428)(192,428)(284,428)
\qbezier[14](338,428)(373,428)(409,428)
\qbezier(166,363)(193,363)(220,363)
\qbezier[13](100,363)(133,363)(166,363)
\qbezier[11](220,363)(249,362)(279,361)
\qbezier(62,269)(63,269)(62,269)
\qbezier[54](62,269)(198,269)(335,269)
\qbezier[10](62,203)(87,203)(113,203)
\qbezier[11](163,203)(192,203)(222,203)
\qbezier[13](268,203)(301,203)(335,203)
\qbezier[13](220,44)(253,44)(287,44)
\qbezier[11](341,44)(370,44)(399,44)
\put(174,81){\circle*{4}}
\qbezier(107,111)(140,96)(174,81)
\qbezier(174,81)(140,63)(107,44)
\put(181,80){$v_{0}$}
\put(10,80){$v_{\frac{k+j}{2}}$}
\qbezier(279,361)(344,361)(409,361)
\end{picture}
\end{center}

\

{\bf Subcase 2.1.1} $z\geq 2$. Then we let
$G^{'}=G_{1}-v_{\frac{k+j}{2}-1}v_{\frac{k+j}{2}}-v_{\frac{k+j}{2}+1}v_{\frac{k+j}{2}}+v_{j}v_{\frac{k+j}{2}}+v_{k}v_{\frac{k+j}{2}}-v_{j}v_{\eta_{1}}-v_{\eta_{z}}v_{k}-v_{\mu}v_{\mu-1}+v_{\mu}v_{\eta_{1}}+v_{\eta_{z}}v_{\mu-1}$ (see Fig. 3.1).
Now, let $F=(f(v_{0})$, $f(v_{1})$, $\ldots$, $f(v_{n-1}))^{T}$ be a vector satisfying that
$$\left \{\begin{array}{ll}
 f(v_{i})=y(v_{i}),\ & \ v_{i}\in V(\Theta(j,k))\setminus \{v_{\eta_{1}}, v_{\eta_{2}}, \cdots, v_{\eta_{z}}\};\\
\\ f(v_{\eta_{i}})=(-1)^{i}y(v_{\mu}),\ & \ i=1, 2, \ldots, z. \end{array}\right.$$
Then $F^{T}F\geq Y^{T}Y$ and $F^{T}Q(G^{'})F\leq Y^{T}Q(G_{1})Y= Y^{T}Q(\Theta(j,k))Y$. As a result, we have $q(G^{'})\leq\frac{F^{T}Q(G^{'})F}{F^{T}F}\leq \frac{Y^{T}Q(\Theta(j,k))Y}{Y^{T}Y}=q(\Theta(j,k))$. As proved for $q(\mathbb{C})<q(G)$ in the proof of Lemma \ref{le03.04} (by considering $q(G^{'})f(v_{\eta_{1}})$), we get that $q(G^{'})<q(\Theta(j,k))$.

{\bf Subcase 2.1.2}  $P_{2}(\Theta(j,k))= v_{j}v_{k}$. Then we let
$G^{'}=G_{1}-v_{\frac{k+j}{2}-1}v_{\frac{k+j}{2}}-v_{\frac{k+j}{2}+1}v_{\frac{k+j}{2}}+v_{j}v_{\frac{k+j}{2}}+v_{k}v_{\frac{k+j}{2}}-v_{j}v_{k}$.
Then $Y^{T}Q(G^{'})Y\leq Y^{T}Q(G_{1})Y= Y^{T}Q(\Theta(j,k))Y$ and $q(G^{'})\leq\frac{Y^{T}Q(G^{'})Y}{Y^{T}Y}\leq \frac{Y^{T}Q(G_{1})Y}{Y^{T}Y}= \frac{Y^{T}Q(\Theta(j,k))Y}{Y^{T}Y}=q(\Theta(j,k))$. Note that $q(G_{1})=q(\Theta(j,k))$. As Case 1, by considering $q(G^{'})y(v_{\frac{k+j}{2}-1})$ and $q(G_{1})y(v_{\frac{k+j}{2}-1})$, we get that $q(G^{'})<q(G_{1})$, and $q(G^{'})<q(\Theta(j,k))$ further.

{\bf Subcase 2.2}  $\mu\in \{\eta_{1}$, $\eta_{2}$, $\ldots$, $\eta_{z}\}$ and $z\geq 2$. Without loss of generality, for convenience, suppose $\mu=\eta_{2}$. Then we let $G^{'}=G_{1}-v_{\frac{k+j}{2}-1}v_{\frac{k+j}{2}}-v_{\frac{k+j}{2}+1}v_{\frac{k+j}{2}}+v_{j}v_{\frac{k+j}{2}}+v_{k}v_{\frac{k+j}{2}}-v_{j}v_{j+1}-v_{k-1}v_{k}-v_{\eta_{2}}v_{\eta_{1}}+v_{k-1}v_{\eta_{1}}+v_{j+1}v_{\eta_{2}}$
(where if $\mu=\eta_{1}$, we can let $G^{'}=G_{1}-v_{\frac{k+j}{2}-1}v_{\frac{k+j}{2}}-v_{\frac{k+j}{2}+1}v_{\frac{k+j}{2}}+v_{j}v_{\frac{k+j}{2}}+v_{k}v_{\frac{k+j}{2}}-v_{j}v_{j+1}-v_{k-1}v_{k}-v_{\eta_{2}}v_{\eta_{1}}+v_{\eta_{1}}v_{j+1}+v_{k-1}v_{\eta_{2}}$) (see Fig. 3.1).

Now, let $F=(f(v_{0})$, $f(v_{1})$, $\ldots$, $f(v_{n-1}))^{T}$ be a vector satisfying that
$$\left \{\begin{array}{ll}
 f(v_{i})=y(v_{i}),\ & \ v_{i}\in V(\Theta(j,k))\setminus \{v_{j+1}, v_{j+2}, \cdots, v_{\frac{k+j}{2}-1}, v_{\frac{k+j}{2}+1}, v_{\frac{k+j}{2}+2}, \ldots, v_{k-1}\};\\
\\ f(v_{i})=(-1)^{i-j}y(v_{\mu}),\ & \ i=j+1, j+2, \ldots, \frac{k+j}{2}-1;\\
\\ f(v_{i})=(-1)^{i-j-1}y(v_{\mu}),\ & \ i=\frac{k+j}{2}+1, \frac{k+j}{2}+2, \ldots, k-1. \end{array}\right.$$
Note that $\frac{k+j}{2}-1>j$. As Subcase 2.1.1 (by considering $q(G^{'})f(v_{j+1})$), we get that $q(G^{'})<q(\Theta(j,k))$.

Note that $G^{'}\cong \Theta$ for the above $G^{'}$ in Case 1 and Case 2. Consequently, this lemma follows from the above discussion.
 \ \ \ \ \ $\Box$
\end{proof}

\begin{lemma}\label{le03.09} 
If any eigenvector of $\Theta(j,k)$ corresponding to $q(\Theta(j,k))$ does not satisfy the conclusion in Lemma \ref{le03.06}, then $q(\Theta)\leq q(\Theta(j,k))$ with equality if and only if $\Theta(j,k)\cong \Theta$.

\end{lemma}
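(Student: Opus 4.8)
The plan is to settle the equality case at once and, when $\Theta(j,k)\not\cong\Theta$, to produce a graph $G'\cong\Theta$ with $q(G')<q(\Theta(j,k))$ by one edge-swap at $v_0$ together with a one-parameter adjustment of the eigenvector supplied by Lemma \ref{le03.07}. As in Lemmas \ref{le03.06}--\ref{le03.08} I may assume (relabelling $P_1,P_2$ if necessary) that $\|V(P_1(\Theta(j,k)))\|$ is odd, so $k-j$ is even; then $\Theta(j,k)$ is a theta graph with path-length multiset $\{2,\,k-j,\,n-1-(k-j)\}$, and since $\Theta$ has multiset $\{2,2,n-3\}$ with $n-3$ odd, $\Theta(j,k)\cong\Theta$ is equivalent to $k-j=2$, i.e. to $\frac{k+j}{2}=j+1$. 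In that case the lemma is trivial, so from now on I assume $\frac{k+j}{2}\ge j+2$, equivalently $k-j\ge 4$ (so $n\ge 6$), and I must prove the strict inequality $q(\Theta)<q(\Theta(j,k))$.

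Since the hypothesis of this lemma is exactly the hypothesis of Lemma \ref{le03.07}, any eigenvector $W=(w(v_0),\dots,w(v_{n-1}))^{T}$ of $\Theta(j,k)$ for $q(\Theta(j,k))$ satisfies properties (1)--(8) of that lemma; I fix one. I would take
$$G'=\Theta(j,k)-v_0v_j-v_0v_k+v_0v_{\frac{k+j}{2}-1}+v_0v_{\frac{k+j}{2}+1}.$$
Because $k-j\ge 4$ we have $v_{\frac{k+j}{2}-1},v_{\frac{k+j}{2}+1}\notin\{v_j,v_k\}$, so $G'$ is simple; and since $G'-v_0=\mathscr{C}$ is an $(n-1)$-cycle with $n-1\ge 5$ while $v_{\frac{k+j}{2}-1}$ and $v_{\frac{k+j}{2}+1}$ are non-adjacent vertices of $\mathscr{C}$ with common neighbour $v_{\frac{k+j}{2}}$, $G'$ is a cycle $C_{n-1}$ together with one extra vertex joined to a pair of vertices at distance $2$ on it; as that graph is unique up to isomorphism, $G'\cong\Theta$.

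Now I would define $F=(f(v_0),\dots,f(v_{n-1}))^{T}$ by $f(v_i)=w(v_i)$ for $i\neq 0$ and by letting $f(v_0)$ be the real number with $f(v_0)w(v_{\frac{k+j}{2}-1})<0$ and $|f(v_0)|=|w(v_{\frac{k+j}{2}-1})|+|w(v_0)|-|w(v_j)|$; this makes sense because $|w(v_0)|>|w(v_j)|$ by Lemma \ref{le03.07}(6). Using $w(v_{\frac{k+j}{2}+1})=w(v_{\frac{k+j}{2}-1})$ and $w(v_k)=w(v_j)$ from Lemma \ref{le03.07}(1), together with $w(v_0)w(v_j)<0$ and $w(v_0)w(v_k)<0$ from Lemma \ref{le03.07}(4) (the edges $v_0v_j,v_0v_k$ being distinct from the exceptional edge $v_{\eta_{\frac{z}{2}}}v_{\eta_{\frac{z}{2}+1}}$), a short computation with $X^{T}Q(G)X=\sum_{ab\in E(G)}(x(a)+x(b))^{2}$ shows that the two new edges $v_0v_{\frac{k+j}{2}-1},v_0v_{\frac{k+j}{2}+1}$ of $G'$ and the two deleted edges $v_0v_j,v_0v_k$ of $\Theta(j,k)$ each contribute $2(|w(v_0)|-|w(v_j)|)^{2}$; every remaining edge contributes identically in $G'$ and $\Theta(j,k)$, so $F^{T}Q(G')F=W^{T}Q(\Theta(j,k))W$. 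On the other hand $\frac{k+j}{2}-1\ge j+1$ (as $k-j\ge 4$), so Lemma \ref{le03.07}(5) gives $|w(v_{\frac{k+j}{2}-1})|>|w(v_j)|$, hence $|f(v_0)|>|w(v_0)|$ and $F^{T}F>W^{T}W$. As $\Theta(j,k)$ is connected and nonbipartite, $q(\Theta(j,k))>0$; since $F\neq\mathbf{0}^{T}$, the Rayleigh quotient now gives
$$q(\Theta)=q(G')\le\frac{F^{T}Q(G')F}{F^{T}F}=\frac{q(\Theta(j,k))\,W^{T}W}{F^{T}F}<q(\Theta(j,k)),$$
as required; the case $\|V(P_2(\Theta(j,k)))\|$ odd is entirely symmetric.

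I expect the delicate step to be the choice of $f(v_0)$. In Lemma \ref{le03.08} the relevant eigenvector has $y(v_0)=0$, so $v_0$ may be re-soldered without cost; here $w(v_0)$ is a fixed nonzero value tied to $w(v_j)$ by the eigenequation at $v_0$, and $f(v_0)$ has to do two jobs at once — keep the quadratic form from increasing and make $F$ strictly longer than $W$. The slack making both possible is precisely the strict inequality $|w(v_{\frac{k+j}{2}-1})|>|w(v_j)|$ from Lemma \ref{le03.07}(5); verifying that slack and the exact cancellation $F^{T}Q(G')F=W^{T}Q(\Theta(j,k))W$ is the only real content of the proof.
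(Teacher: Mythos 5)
Your proof is correct and takes essentially the same route as the paper: the same edge swap $G'=\Theta(j,k)-v_0v_j-v_0v_k+v_0v_{\frac{k+j}{2}-1}+v_0v_{\frac{k+j}{2}+1}$ and the same modified vector $F$ (the paper sets $\alpha=|w(v_0)|-|w(v_j)|$ and $f(v_0)=-\mathrm{sgn}(w(v_{\frac{k+j}{2}-1}))(|w(v_{\frac{k+j}{2}-1})|+\alpha)$, which is exactly your choice of $f(v_0)$). Your explicit check that $G'\cong\Theta$ and your edge-by-edge verification that $F^{T}Q(G')F=W^{T}Q(\Theta(j,k))W$ merely spell out details the paper leaves implicit.
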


\begin{proof}
Without loss of generality, for convenience, we suppose $\|V(P_{1}(\Theta(j,k)))\|$ is odd (for the case that $\|V(P_{2}(\Theta(j,k)))\|$ is odd, it is proved similarly). Then both $k+j$ and  $\|V(P_{2}(\Theta(j,k)))\|$ are even. If $\frac{k+j}{2}=j+1$, then the lemma is trivial because $\Theta(j,k)\cong \Theta$ now. Thus we suppose $\frac{k+j}{2}\geq j+2$ next.

By Lemma \ref{le03.07}, an eigenvector $W=(w(v_{0})$, $w(v_{1})$, $\ldots$, $w(v_{n-1}))^{T}\in R^{n}$ of $\Theta(j,k)$ corresponding to $q(\Theta(j,k))$ must satisfy the conclusions (1)-(8) of  Lemma \ref{le03.07}.

Let $G^{'}=\Theta(j,k)-v_{0}v_{j}-v_{0}v_{k}+v_{0}v_{\frac{k+j}{2}-1}+v_{0}v_{\frac{k+j}{2}+1}$. Note that $w(v_{i})\neq 0$ for $0\leq i\leq n-1$, $w(v_{\frac{k+j}{2}-1})=w(v_{\frac{k+j}{2}+1})$, $w(v_{j})=w(v_{k})$, $| w(v_{\frac{k+j}{2}-1})|> | w(v_{j})|$, $| w(v_{0})|>| w(v_{j})|$, $w(v_{0})w(v_{j})< 0$, and $w(v_{\frac{k+j}{2}})w(v_{\frac{k+j}{2}-1})<0$. Suppose $| w(v_{0})|-| w(v_{j})|=\alpha$. Then $w(v_{0})=-\mathrm{sgn}(w(v_{j}))(| w(v_{j})|+\alpha)$. Let $F=(f(v_{0})$, $f(v_{1})$, $\ldots$, $f(v_{n-1}))^{T}$ be a vector satisfying that
$$\left \{\begin{array}{ll}
 f(v_{i})=w(v_{i}),\ & \ v_{i}\in V(\Theta(j,k))\setminus \{v_{0}\};\\
\\ f(v_{0})=-\mathrm{sgn}(w(v_{\frac{k+j}{2}-1}))(| w(v_{\frac{k+j}{2}-1})|+\alpha).\ & \  \end{array}\right.$$
Then $|f(v_{0})|> |w(v_{0})|$, $F^{T}Q(G^{'})F=W^{T}Q(\Theta(j,k))W$, $F^{T}F>W^{T}W$, and then it follows that $q(G^{'})\leq \frac{F^{T}Q(G^{'})F}{F^{T}F}<\frac{W^{T}Q(\Theta(j,k))W}{W^{T}W}=q(\Theta(j,k))$. Note that $G^{'}\cong \Theta$. Then $q(\Theta)< q(\Theta(j,k))$.

Consequently, this lemma follows from the above discussion.
 \ \ \ \ \ $\Box$
\end{proof}

\begin{lemma}\label{le03.09.01} 
Any eigenvector of $\Theta$ corresponding to $q(\Theta)$ does not satisfy the conclusion in Lemma \ref{le03.06}.

\end{lemma}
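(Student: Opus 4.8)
The plan is to pin down, for $\Theta=\Theta(2,n-1)$, exactly what ``satisfying the conclusion of Lemma \ref{le03.06}'' demands and then to show that $q(\Theta)$ is simply never attained by a vector of that shape. Here $j=2$, $k=n-1$, so $k+j=n+1$ is odd and $z=1$ with $\eta_{1}=1$; reading off the conclusion of Lemma \ref{le03.06} in this case, an eigenvector $Y$ satisfying it must have $y(v_{0})=0$, $y(v_{1})=0$, $y(v_{2})\neq 0$, together with the skew-symmetry $y(v_{i})=-y(v_{n+1-i})$ for $2\le i\le \frac{n}{2}$ along the long path $v_{2}v_{3}\cdots v_{n-1}$. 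First I would record the automorphism $\phi$ of $\Theta$ that fixes $v_{0},v_{1}$ and reflects the long path by $v_{i}\leftrightarrow v_{n+1-i}$ ($2\le i\le n-1$); a vector with $y(v_{0})=y(v_{1})=0$ and the above skew-symmetry is precisely a $\phi$-antisymmetric vector, nonzero on the path iff $y(v_{2})\neq 0$. Hence it suffices to prove that $q(\Theta)$ is \emph{not} attained in the $\phi$-antisymmetric subspace.

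Since $\phi$ commutes with $Q(\Theta)$, the spectrum splits into a $\phi$-symmetric and a $\phi$-antisymmetric part; write $q_{s}$ and $q_{a}$ for the least eigenvalue in each, so $q(\Theta)=\min(q_{s},q_{a})$. If I can prove the strict inequality $q_{s}<q_{a}$, then $q(\Theta)=q_{s}$, its eigenspace lies inside the $\phi$-symmetric subspace, which meets the $\phi$-antisymmetric subspace only in $0$; so no eigenvector for $q(\Theta)$ can have the required shape, and the lemma follows. To compare the two sectors I would fold across the central edge $v_{n/2}v_{n/2+1}$, reducing each to a Jacobi (tridiagonal, off-diagonal $1$) eigenvalue problem on the half path $v_{2},\dots,v_{n/2}$ (say $m'=\frac{n}{2}-1$ vertices). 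The antisymmetric sector gives the matrix $M_{a}$ with diagonal $(3,2,\dots,2,1)$, so $q_{a}=\lambda_{\min}(M_{a})$; here $0<q_{a}<2$, the upper bound coming from the coordinate vector at the diagonal-$1$ end and positivity from nonbipartiteness. In the symmetric sector the two degree-two twins $v_{0},v_{1}$ share the equation $2p+2y(v_{2})=\lambda p$, which (as $q_{s}=q(\Theta)<2$ by Lemma \ref{le02,02}, with $\delta=2$) lets me eliminate $p=y(v_{0})=y(v_{1})$ and replace them by an effective diagonal entry $3+\frac{4}{\lambda-2}$ at $v_{2}$, the centre entry becoming $3$.

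The crux is then a single determinant inequality. Writing $B=M_{a}-q_{a}I$ (singular, with simple kernel since $M_{a}$ is Jacobi) and $N$ for the half-path matrix with diagonal $(3+\frac{4}{q_{a}-2},2,\dots,2,3)$, I have $N-q_{a}I=B+\mathrm{diag}(\tfrac{4}{q_{a}-2},0,\dots,0,2)$; expanding this rank-two diagonal update and using $\det B=0$ gives $\det(N-q_{a}I)=\frac{4}{q_{a}-2}M_{1}+2M_{m'}+\frac{8}{q_{a}-2}M_{1,m'}$, where $M_{1},M_{m'},M_{1,m'}$ are the principal minors of $B$ obtained by deleting the $v_{2}$-index, the $v_{n/2}$-index, and both. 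Evaluating these three minors through Chebyshev polynomials $U_{p}(\cos\theta)$ with $\cos\theta=1-\frac{q_{a}}{2}$, the combination collapses: the computation yields $2M_{1}+4M_{1,m'}-(2-q_{a})M_{m'}=q_{a}M_{m'}$, hence $\det(N-q_{a}I)=\frac{2q_{a}M_{m'}}{q_{a}-2}$. Since $q_{a}>0$, $q_{a}-2<0$, and $M_{m'}>0$ (a leading principal minor of the positive semidefinite $B$ not meeting the kernel, i.e. strictly positive by Cauchy interlacing for Jacobi matrices), this is strictly negative, so $\lambda_{\min}(N)<q_{a}$. Finally, because the effective diagonal $3+\frac{4}{\lambda-2}$ decreases in $\lambda$, the map $\nu(\lambda)=\lambda_{\min}(N_{\lambda})$ decreases and meets the increasing line $\{\lambda\}$ exactly once, at $q_{s}$; from $\nu(q_{a})=\lambda_{\min}(N)<q_{a}$ I conclude $q_{a}>q_{s}$, as needed.

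I expect the main obstacle to be precisely this comparison $q_{s}<q_{a}$, and especially the fact that the obvious test vectors fail: reflecting the antisymmetric minimizer into a symmetric vector, or extending it by a $v_{0}$-component, both \emph{raise} the Rayleigh quotient, because the antisymmetric eigenvector grows in magnitude toward the centre while the symmetric minimizer decays there, so the two profiles are genuinely different. The determinant route sidesteps test vectors entirely, and the delicate step is the Chebyshev collapse to $M_{m'}>0$. I would verify it first in $n=4$ (where in fact $q_{a}=2$ outright, so Lemma \ref{le02,02} alone already forces the conclusion) and in $n=6,8$ before trusting the general identity.
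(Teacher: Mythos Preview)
Your approach is correct and genuinely different from the paper's. The paper argues by contradiction in three lines: assuming such an eigenvector $Y$ exists, it adds the edge $v_{0}v_{1}$ (free, since $y(v_{0})=y(v_{1})=0$) to obtain $G_{1}=\Theta+v_{0}v_{1}$ with $q(G_{1})=q(\Theta)$ and $Y$ still an eigenvector, then deletes $v_{0}v_{n-1}$ to get $G_{2}=\Theta(1,2)$; now Lemma~\ref{le02,03} gives $q(G_{2})\le q(G_{1})=q(\Theta)$, while Lemmas~\ref{le03.08}--\ref{le03.09} give $q(\Theta)<q(\Theta(1,2))=q(G_{2})$, a contradiction. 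So the paper leverages the already-established extremality of $\Theta$ among all $\Theta(j,k)$ via a single edge switch. Your route is self-contained: you never invoke Lemmas~\ref{le03.08}--\ref{le03.09}, instead splitting the spectrum by the reflection $\phi$, reducing each sector to a Jacobi problem on the half-path, and proving $q_{s}<q_{a}$ directly through the determinant identity $M_{1}+2M_{1,m'}=M_{m'}$ (which indeed holds, as one checks from $M_{1}=T_{m'-1}-T_{m'-2}$, $M_{m'}=T_{m'-1}+T_{m'-2}$, $M_{1,m'}=T_{m'-2}$). The paper's argument is shorter and more structural but depends on the heavier machinery built just before; yours is computationally heavier but yields explicit spectral information (the folded matrices $M_{a}$ and $N_{\lambda}$) and would stand alone even without Lemmas~\ref{le03.08}--\ref{le03.09}. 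Your monotonicity step ($\nu(\lambda)$ decreasing, hence the unique crossing is $q_{s}$, and $\nu(q_{a})<q_{a}$ forces $q_{s}<q_{a}$) is sound, and your separate treatment of $n=4$ correctly handles the degenerate $m'=1$ case.
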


\begin{proof}
We prove this lemma by contradiction. Suppose there is an eigenvector $Y=(y(v_{0})$, $y(v_{1})$, $\ldots$, $y(v_{n-1}))^{T}\in R^{n}$ of $\Theta$ corresponding to $q(\Theta)$ satisfying the conclusion in Lemma \ref{le03.06}. Then by Lemma \ref{le03.06}, it is known that (1) $y(v_{0})=y(v_{1})=0$; (2) $y(v_{2})\neq 0$; (3) $y(v_{i})=-y(v_{n+1-i})$ for $2\leq i\leq\frac{n}{2}$.

Let $G_{1}=\Theta+v_{0}v_{1}$. Note that $Y^{T}Q(G_{1})Y=Y^{T}Q(\Theta)Y$. By Lemma \ref{le02,03}, it follows that $q(\Theta)\leq q(G_{1})\leq \frac{Y^{T}Q(G_{1})Y}{Y^{T}Y}=\frac{Y^{T}Q(\Theta)Y}{Y^{T}Y}=q(\Theta)$. Thus $q(\Theta)= q(G_{1})$ and $Y$ is also an eigenvector of $G_{1}$ corresponding to $q(G_{1})$. Now, we let $G_{2}=G_{1}-v_{0}v_{n-1}$. Using Lemma \ref{le02,03} again gets that $q(G_{2})\leq q(G_{1})$. Note $G_{2}=\Theta(1,2)$ now. Using Lemma \ref{le03.08} and Lemma \ref{le03.09} gets that $q(\Theta)<q(G_{2})$. Then we get a contradiction that $q(\Theta)<q(G_{2})\leq q(G_{1})=q(\Theta)$. This makes the lemma hold.
 \ \ \ \ \ $\Box$
\end{proof}

\begin{lemma}\label{le03.010} 
Let $G\in \mathcal {H}$, $V(G) = \{v_{0}, v_{1}, \ldots, v_{n-1}\}$, $X=(x(v_{0})$, $x(v_{1})$, $\ldots$, $x(v_{n-1}))^{T}\in R^{n}$ be
an eigenvector of $G$ corresponding to $q(G)$,
and $|x(v_{\mu})|=\max\{|x(v_{i})|\,|\, v_{i}\in V(G)\}$, $v_{\mu}$ be in an odd cycle $C$. If $L(C)= n-1$, then $q(\Theta)\leq q(G) $ with equality if and only if $G\cong \Theta$.
\end{lemma}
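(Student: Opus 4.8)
\textbf{Proof proposal for Lemma~\ref{le03.010}.}

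The plan is to reduce the case $L(C)=n-1$ (so there is exactly one vertex $v_\mu$-adjacent vertex outside $C$, call it $w$, with $V(G)\setminus V(C)=\{w\}$) to a comparison with a $\Theta(j,k)$-graph, and then invoke Lemmas~\ref{le03.06}--\ref{le03.09.01}. First I would fix notation: write $C=u_1u_2\cdots u_{n-1}u_1$ with $v_\mu$ on $C$, and let $w$ be the single vertex not on $C$. Since $G$ is $2$-connected, $w$ has at least two neighbours; by Lemma~\ref{cl03.02} (or directly, since $G-w$ must be connected and $w$ has degree $\ge 2$) there are two inner-disjoint paths from $w$ to $C$ which, because $V(G)\setminus V(C)=\{w\}$, are just two edges $wv_a$ and $wv_b$ with $v_a,v_b\in V(C)$, $a\ne b$. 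So $G$ contains, as a spanning subgraph, the graph $C + wv_a + wv_b$, which is exactly a $\Theta$-type graph $\Theta(j,k)$ on $n$ vertices (with $w$ playing the role of $v_0$, and the two arcs of $C$ between $v_a$ and $v_b$ playing the roles of $P_1,P_2$); moreover $G$ may have extra chords, i.e.\ edges of $C$-to-$C$ type or $w$-to-$C$ type beyond $wv_a,wv_b$.

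The core step is to show $q(\Theta)\le q(G)$. I would argue in two stages. Stage one: pass from $G$ to the spanning $\Theta$-subgraph $\Theta(j,k)=C+wv_a+wv_b$ by repeatedly deleting edges; by Lemma~\ref{le02,03} each deletion does not increase $q$, so $q(\Theta(j,k))\le q(G)$. Here I must be slightly careful: Lemma~\ref{le02,03} requires the graph to stay connected, and I must make sure the intermediate graphs stay connected and that the final $\Theta(j,k)$ is still nonbipartite — this is automatic because $C$ is an odd cycle and remains intact throughout, and the odd cycle certifies nonbipartiteness at every step. Stage two: compare $q(\Theta)$ with $q(\Theta(j,k))$. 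If $k+j$ is even (equivalently, if the parities work out so that Lemmas~\ref{le03.06}--\ref{le03.09} apply), then by the dichotomy of Lemmas~\ref{le03.08} and \ref{le03.09} — depending on whether some eigenvector of $\Theta(j,k)$ satisfies the conclusion of Lemma~\ref{le03.06} or not — we get $q(\Theta)\le q(\Theta(j,k))$ with equality only when $\Theta(j,k)\cong\Theta$. Chaining the two stages gives $q(\Theta)\le q(G)$.

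For the equality characterisation: suppose $q(G)=q(\Theta)$. Then the whole chain of inequalities $q(\Theta)\le q(\Theta(j,k))\le q(G)=q(\Theta)$ collapses, so $q(\Theta(j,k))=q(\Theta)$, which by Lemmas~\ref{le03.08}/\ref{le03.09} forces $\Theta(j,k)\cong\Theta$, i.e.\ $G$ is obtained from $\Theta$ by adding some chords. It then remains to show no nontrivial chord can be added without strictly increasing $q$. For this I would use the eigenvector structure of $\Theta$: by Lemma~\ref{le03.09.01} and then Lemma~\ref{le03.07} (applied to $\Theta=\Theta(2,n-1)$), every eigenvector $W$ of $\Theta$ for $q(\Theta)$ is nowhere zero, alternates in sign along every edge except the one ``tail'' edge $v_{\eta_{z/2}}v_{\eta_{z/2}+1}$, and its absolute values are strictly monotone along the paths. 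If $G=\Theta+e$ for a genuine extra edge $e=v_iv_l$, then either $W(v_i)W(v_l)>0$ (the generic case, since only one edge of $\Theta$ is a ``same-sign'' edge), in which case $(W(v_i)+W(v_l))^2>(W(v_i)-W(v_l))^2$ and replacing $W$ on an appropriate component by a sign-flipped copy strictly decreases the Rayleigh quotient on $G$, contradicting $q(G)=q(\Theta)$; or $W(v_i)W(v_l)<0$ but then $W$ cannot be an eigenvector of $G$ (check the eigenvalue equation at $v_i$: the extra term $W(v_l)$ shifts $q\,W(v_i)$), contradicting $q(G+e)=q(G)=q(\Theta)$ via Lemma~\ref{le02,03}. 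Hence no chord can be added and $G\cong\Theta$.

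The main obstacle I anticipate is the parity bookkeeping in Stage two: one must verify that for the particular $j,k$ arising here ($v_0=w$ attached to two vertices of the odd cycle $C_{n-1}$, with $n$ even so $n-1$ odd), the quantity $k+j$ is indeed even so that Lemmas~\ref{le03.06}--\ref{le03.09} are applicable; this should follow from $n$ even together with the fact that exactly one of the two arcs of $C$ between $v_a$ and $v_b$ has odd order (since $C$ is an odd cycle), but it needs to be stated cleanly. A secondary technical point is ensuring, in the equality analysis, that the ``component'' on which one flips signs is well-defined — i.e.\ that deleting the single same-sign edge from $\Theta$ disconnects it into two pieces — which is true because that edge lies on the unique cycle only through a bridge-like position in $\Theta - (\text{the two }v_0\text{-edges})$; this is exactly the structure exploited in Lemma~\ref{le03.07}, so I would lean on that lemma rather than re-deriving it.
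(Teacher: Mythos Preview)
Your two-stage reduction --- pass to a spanning $\Theta(j,k)$ by deleting edges (Lemma~\ref{le02,03}), then compare $\Theta(j,k)$ with $\Theta$ via the dichotomy of Lemmas~\ref{le03.08}/\ref{le03.09} --- is exactly the paper's argument, and your parity remark (one arc of the odd cycle $C$ has odd length, forcing $k+j$ even after relabelling) is the right justification for invoking those lemmas.

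The gap is in your equality analysis. You take $W$ to be an eigenvector of $\Theta$ and try to produce, by a sign-flip on a ``component'', a vector with Rayleigh quotient on $G$ strictly below $q(\Theta)$. But $\Theta$ is $2$-connected, so deleting the single same-sign edge does \emph{not} disconnect it; there is no component on which to flip, and even after also removing the two $v_0$-edges you would be altering several edge contributions at once, with no clean control. Likewise, in your subcase $W(v_i)W(v_l)<0$ you assert ``$W$ cannot be an eigenvector of $G$'', but you never established that it should be one: you started from an eigenvector of $\Theta$, and nothing forces it to be an eigenvector of $G$.

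The paper's fix is to reverse the direction: take $W$ to be an eigenvector of $G$ corresponding to $q(G)$. Since $\Theta(j,k)\subseteq G$ and $q(\Theta(j,k))=q(G)$, the chain
\[
q(\Theta(j,k))\ \le\ \frac{W^TQ(\Theta(j,k))W}{W^TW}\ \le\ \frac{W^TQ(G)W}{W^TW}\ =\ q(G)\ =\ q(\Theta(j,k))
\]
collapses, so $W$ is also an eigenvector of $\Theta(j,k)\cong\Theta$ and every extra edge $v_\alpha v_\tau\in E(G)\setminus E(\Theta(j,k))$ satisfies $w(v_\alpha)+w(v_\tau)=0$. Now Lemma~\ref{le03.09.01} places you in the hypothesis of Lemma~\ref{le03.07}, whose conclusions (nowhere-zero entries, the sign pattern of (4), the symmetry of (1)--(2), and the strict monotonicity of (5)--(8)) together rule out $w(v_\alpha)=-w(v_\tau)$ for any non-edge of $\Theta$. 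This yields the contradiction directly, with no sign-flipping.
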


\begin{proof}
Suppose that $G\ncong \Theta$, $C=v_{1}v_{2}\cdots v_{n-1}v_{1}$. Then
$V(G)\setminus V(C)=\{v_{0}\}$. Note that $G$ is 2-connected. By Lemma \ref{cl03.02}, there are two edges $v_{0}v_{j}$ and $v_{0}v_{k}$ where $1\leq j<k\leq n-1$. Then $C+v_{0}v_{j}+v_{0}v_{k}$ is a $\Theta(j,k)$. Thus we let $\Theta(j,k)=C+v_{0}v_{j}+v_{0}v_{k}$ here, and denote by $P_{1}(\Theta(j,k))=v_{j}v_{j+1}\cdots v_{k-1}v_{k}$, $P_{2}(\Theta(j,k))=C-\{v_{j+1}$, $\ldots$, $v_{k-1}\}=v_{j}v_{\eta_{1}}v_{\eta_{2}}\cdots v_{\eta_{z}}v_{k}$ if $k-j< n-2$; $P_{2}(\Theta(j,k))=v_{j}v_{k}$ if $k-j= n-2$. By Lemma \ref{le02,03}, we know that $q(\Theta(j,k))\leq q(G)$.
Note that $C$ is an odd cycle. Then one of $\|V(P_{1}(\Theta(j,k)))\|$ and $\|V(P_{2}(\Theta(j,k)))\|$ is odd. Without loss of generality, assume that $\|V(P_{1}(\Theta(j,k)))\|$ is odd. Then $k+j$ is even.

{\bf Case 1} $\frac{k+j}{2}-1>j$. Then $\Theta(j,k)\ncong \Theta$. Consequently, there are two cases to consider. One case is that there is an eigenvector $Y$ satisfying the conclusion in Lemma \ref{le03.06}. The other one is that any eigenvector of $\Theta(j,k)$ corresponding to $q(\Theta(j,k))$ does not satisfy the conclusion in Lemma \ref{le03.06}.
For these two cases, $q(\Theta)< q(\Theta(j,k))$ follows from Lemma \ref{le03.08} and Lemma \ref{le03.09} respectively. Thus $q(\Theta)< q(G)$.

{\bf Case 2} $\frac{k+j}{2}-1=j$. Note that $\Theta(j,k)\cong \Theta$, $G\neq \Theta$. It follows that $G\neq \Theta(j,k)$. Then there is at least one edge in $G$ which is not in $\Theta(j,k)$. Suppose that $v_{\alpha}v_{\tau}\in E(G)$ but $v_{\alpha}v_{\tau}\not\in E(\Theta(j,k))$. Assume that $q(\Theta(j,k))= q(G)$ and $W=(w(v_{0})$, $w(v_{1})$, $\ldots$, $w(v_{n-1}))^{T}\in R^{n}$ is an eigenvector of $G$ corresponding to $q(G)$. Note that $q(\Theta(j,k))\leq \frac{W^{T}Q(\Theta(j,k))W}{W^{T}W}\leq \frac{W^{T}Q(G)W}{W^{T}W}=q(G)$. Thus it follows that $W^{T}Q(\Theta(j,k))W=W^{T}Q(G)W$, $W=(w(v_{0})$, $w(v_{1})$, $\ldots$, $w(v_{n-1}))^{T}\in R^{n}$ is also an eigenvector of $\Theta(j,k)$ corresponding to $q(\Theta(j,k))$, and follows that $(w(v_{\alpha})+w(v_{\tau}))^{2}=0$ further. Note that $\Theta(j,k)\cong \Theta$. By Lemma \ref{le03.07} and Lemma \ref{le03.09.01}, it follows that $w(v_{\alpha})+w(v_{\tau})\neq 0$ which contradicts that $(w(v_{\alpha})+w(v_{\tau}))^{2}=0$. As a result, $q(\Theta(j,k))< q(G)$ follows, and furthermore, $q(\Theta)< q(G)$ follows.

Then the lemma follows from the above discussion. \ \ \ \ \ $\Box$
\end{proof}

\begin{theorem}\label{le03.011} 
Let $G\in \mathcal {H}$. Then $q(\Theta)\leq q(G) $ with equality if and only if $G\cong \Theta$.
\end{theorem}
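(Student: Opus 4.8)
The plan is to reduce the statement to the two structural lemmas already proved, namely Lemma~\ref{le03.05} and Lemma~\ref{le03.010}, via a short parity argument on the length of an odd cycle carrying the extremal entry of a least-$Q$-eigenvector of $G$.

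First I would fix an eigenvector $X=(x(v_{0}),\ldots,x(v_{n-1}))^{T}$ of $G$ corresponding to $q(G)$ and choose $v_{\mu}$ with $|x(v_{\mu})|=\max\{|x(v_{i})|\mid v_{i}\in V(G)\}$; since $X\neq\mathbf{0}^{T}$ we have $x(v_{\mu})\neq0$. Because $G$ is nonbipartite and $2$-connected, Lemma~\ref{le03.03} applies and places $v_{\mu}$ in an odd cycle $C$ of $G$. Since $C$ is an odd cycle, $L(C)=\|V(C)\|$ is odd, while $n$ is even; hence $n-L(C)$ is a positive odd integer (it is positive because an odd cycle cannot use all $n$ vertices of an even-order graph). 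Therefore exactly one of the two situations occurs: $L(C)=n-1$, or $n-L(C)\ge 3$.

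Next I would dispatch the two cases. If $n-L(C)\ge 3$, Lemma~\ref{le03.05} gives $q(\Theta)<q(G)$, so the strict inequality holds and no equality case arises there. If $L(C)=n-1$, Lemma~\ref{le03.010} gives $q(\Theta)\le q(G)$ with equality if and only if $G\cong\Theta$. Combining the two cases, $q(\Theta)\le q(G)$ always holds; moreover equality can occur only in the second case and then forces $G\cong\Theta$, while conversely $G\cong\Theta$ trivially yields equality. This completes the argument.

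Since all the analytic work --- constructing the comparison vectors, the detailed sign and monotonicity analysis of the eigenvectors of $\Theta(j,k)$ in Lemmas~\ref{le03.06}--\ref{le03.09.01}, and the edge-perturbation estimates --- is already carried out in the preceding lemmas, the only genuinely new ingredient here is the parity observation that pins $n-L(C)$ to be odd and positive, cleanly separating the ``$C$ nearly spanning'' regime from the ``$C$ small'' regime. I do not expect a real obstacle; the one point to be careful about is to confirm that the odd cycle $C$ supplied by Lemma~\ref{le03.03} indeed satisfies $\|V(C)\|<n$ (guaranteed precisely by the even/odd parity mismatch), so that either Lemma~\ref{le03.010} or Lemma~\ref{le03.05} is always applicable and the two cases exhaust all possibilities.
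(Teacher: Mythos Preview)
Your proposal is correct and follows exactly the paper's approach: the paper's proof is the single line ``This result follows from Lemma~\ref{le03.05} and Lemma~\ref{le03.010},'' and you have simply made explicit the parity observation (that $n-L(C)$ is odd and positive, so one of the two lemma hypotheses must hold) and the invocation of Lemma~\ref{le03.03} that are implicit in that sentence.
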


\begin{proof}
This result follows from Lemma \ref{le03.05} and Lemma \ref{le03.010}.
\ \ \ \ \ $\Box$
\end{proof}

{\bf Proof of Theorem \ref{th01.01}}
This theorem follows from Theorem \ref{le03.04} and Theorem \ref{le03.011}. \ \ \ \ \ $\Box$

Theorem \ref{th01.02} follows from Theorem \ref{th01.01}  as a corollary directly.

\small {

}

\end{document}